\author{James B.~Kennedy}
\author{Robin Lang}
\address{James B.~Kennedy, Grupo de F{\'i}sica Matem\'atica, Faculdade de Ci\^encias, Universidade de Lisboa, Campo Grande, Edif{\'i}cio C6, P-1749-016 Lisboa, Portugal}
\email{jbkennedy@fc.ul.pt}
\address{Robin Lang, Institut f\"ur Analysis, Dynamik und Modellierung, Universit\"at Stuttgart, Pfaffenwaldring 57, D-70569 Stuttgart, Germany}
\email{robin.lang@mathematik.uni-stuttgart.de}
\numberwithin{equation}{section}
\newtheorem{theorem}{Theorem}[section]
\newtheorem{corollary}[theorem]{Corollary}
\newtheorem{lemma}[theorem]{Lemma}
\newtheorem{conjecture}[theorem]{Conjecture}
\theoremstyle{definition}
\newtheorem{remark}[theorem]{Remark}
\newcommand{\R}{\mathbb{R}}
\newcommand{\N}{\mathbb{N}}
\newcommand{\C}{\mathbb{C}}
\newcommand{\V}{\mathcal{V}}
\newcommand{\G}{\mathcal{G}}
\newcommand{\A}{\mathcal{A}}
\newcommand{\E}{\mathcal{E}}
\newcommand{\Scal}{\mathcal{S}}
\renewcommand{\G}{\mathcal{G}}
\DeclareMathOperator{\dist}{dist}
\DeclareMathOperator{\diag}{diag}
\DeclareMathOperator{\GM}{\mathfrak{m}}
\renewcommand{\Im}{{\rm Im}\,}
\renewcommand{\Re}{{\rm Re}\,}
\newcommand{\I}{{\rm i}}
\newcommand{\e}{{\rm e}}
\newcommand{\MD}{\mathfrak{D}}
\newcommand{\dx}{\text{d}x}
\let\phi\varphi
\let\epsilon\varepsilon
\title[Quantum graph Laplacians with large complex $\delta$ couplings]{On the eigenvalues of quantum graph Laplacians with large complex $\delta$ couplings}
\thanks{\emph{Mathematics Subject Classification} (2010). 34B45 (34L15 35R02 47A10 81Q12 81Q35)}
\thanks{\emph{Key words and phrases}. Laplacian, quantum graph, Robin boundary conditions, spectral theory of non-self-adjoint operators, estimates on eigenvalues, delta vertex conditions}
\thanks{The authors would like to thank Timo Weidl for originally suggesting the problem, and Sabine B\"ogli for many helpful discussions. The work of the authors was supported by the Funda{\c{c}}{\~a}o para a Ci{\^e}ncia e a Tecnologia, Portugal, via the program ``Investigador FCT'', reference IF/01461/2015 (J.B.K.), and project PTDC/MAT-CAL/4334/2014 (both authors). The work of Robin Lang was supported by the Deutsche Forschungsgemeinschaft (DFG) through the Research Training Group 1838: Spectral Theory and Dynamics of Quantum Systems.}
\begin{document}

\maketitle

\begin{abstract}
We study the location of the spectrum of the Laplacian on compact metric graphs with complex Robin-type vertex conditions, also known as $\delta$ conditions, on some or all of the graph vertices. We classify the eigenvalue asymptotics as the complex Robin parameter(s) diverge to $\infty$ in $\C$: for each vertex $v$ with a Robin parameter $\alpha \in \C$ for which $\Re\alpha \to -\infty$ sufficiently quickly, there exists exactly one divergent eigenvalue, which behaves like $-\alpha^2/\deg v^2$, while all other eigenvalues stay near the spectrum of the Laplacian with a Dirichlet condition at $v$; if $\Re \alpha$ remains bounded from below, then all eigenvalues stay near the Dirichlet spectrum. Our proof is based on an analysis of the corresponding Dirichlet-to-Neumann matrices (Titchmarsh--Weyl $M$-functions). We also use sharp trace-type inequalities to  prove estimates on the numerical range and hence on the spectrum of the operator, which allow us to control both the real and imaginary parts of the eigenvalues in terms of the real and imaginary parts of the Robin parameter(s).
\end{abstract}

\section{Introduction}
\label{sec:introduction}

Consider the eigenvalue problem for the Laplacian with Robin boundary conditions
\begin{equation}
\label{eq:robin-laplacian}
\begin{aligned}
	-\Delta u &= \lambda u \qquad &&\text{in } \Omega,\\
	\frac{\partial u}{\partial\nu} + \alpha u &=0 \qquad &&\text{on } \partial\Omega,
\end{aligned}
\end{equation}
on a bounded Lipschitz domain $\Omega\subset\R^d$, $d \geq 2$ (or a bounded interval if $d=1$), where $\nu$ is the outer unit normal to $\Omega$, and $\alpha$, which is most commonly taken as a real number but may be complex or a function defined on the boundary $\partial\Omega$, is to be thought of as a parameter. If $\alpha \in\R$, then \eqref{eq:robin-laplacian} admits a sequence of real eigenvalues, which we number by increasing size and repeat according to their finite multiplicities, $\lambda_1 (\alpha) \leq \lambda_2 (\alpha) \leq \ldots \to \infty$; these eigenvalues are piecewise analytic functions of $\alpha$.

The asymptotic behaviour of these eigenvalues as the Robin parameter $\alpha \in \R$ becomes large has been studied intensively over the last decade, in particular in the singular limit $\alpha \to -\infty$; we refer to the recent survey \cite{BFK} as well as, e.g., \cite{BruneauPopoff,ExnerMinakovParnovski,FreitasKrejcirik,HelfferKachmar,Khalile,KovarikPankrashkin,PankrashkinPopoff} and the references therein. Briefly, if $\alpha \to +\infty$, then $\lambda_k (\alpha)$ converges from below to the $k$th eigenvalue of the Laplacian with Dirichlet (zero) boundary conditions, while if $\alpha \to -\infty$, then there exists a sequence of eigenvalues each of which diverges like $-C\alpha^2$ for some constant $C \geq 1$ (which is not yet fully understood and may depend on $\Omega$ and the eigenvalue curve in question), while any bounded analytic curve of eigenvalues converges to some eigenvalue of the Dirichlet Laplacian from above; we refer to \cite{BKL,BFK} for more details.

The question as to what happens when $\alpha$ is a large \emph{complex} parameter, corresponding to an impedance boundary condition, was recently asked for the first time in \cite{BKL}. Although the problem is a natural extension of the real case, the operator on $L^2(\Omega)$ associated with the problem \eqref{eq:robin-laplacian}, that is, the Robin Laplacian, is clearly no longer self-adjoint if $\alpha \not\in \R$. Thus, although the eigenvalues of the operator still form analytic families in dependence on $\alpha$ whose asymptotic behaviour should be compatible with the asymptotic behaviour for real $\alpha$, all the variational techniques used in the above-mentioned works, such as the variational characterisation of the eigenvalues and Dirichlet-Neumann bracketing, are no longer applicable, and new methods are needed. We start by recalling the conjecture made in \cite[Conjecture~1.2]{BKL} on the expected behaviour on general domains.

\begin{conjecture}
\label{conj:general}
Let $\Omega \subset \R^d$, $d\geq 2$, be a bounded Lipschitz domain, and suppose that $\alpha \to \infty$ in $\C$.
\begin{enumerate}
\item If $\Re \alpha \to -\infty$, then there exists an infinite family of analytic branches of absolutely divergent eigenvalues behaving like
\begin{enumerate}
\item[(i)] $-\alpha^2 + o(\alpha^2)$ if $\partial\Omega$ is of class $C^1$;
\item[(ii)] $-C\alpha^2 + o(\alpha^2)$, where the constant $C\geq 1$ may depend on $\Omega$ and the corresponding curve of eigenvalues otherwise.
\end{enumerate}
Every other eigenvalue converges to an eigenvalue of the Dirichlet Laplacian.
\item If $\Re \alpha$ remains bounded from below, then every eigenvalue converges to an eigenvalue of the Dirichlet Laplacian.
\end{enumerate}
\end{conjecture}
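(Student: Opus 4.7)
The plan is to mirror what the abstract sketches in the quantum graph setting and recast the Robin eigenvalue problem through the Dirichlet-to-Neumann map $M(\lambda):H^{1/2}(\partial\Omega)\to H^{-1/2}(\partial\Omega)$. Away from the Dirichlet spectrum, $\lambda\in\C$ is a Robin eigenvalue for parameter $\alpha$ iff $-\alpha$ lies in the spectrum of $M(\lambda)$, so the asymptotic question as $\alpha\to\infty$ becomes one about where level curves of the operator-valued function $\lambda\mapsto M(\lambda)$ must go. This reformulation would be coupled with numerical-range estimates obtained from the sharp trace inequality $\|u\|_{L^2(\partial\Omega)}^2\le\epsilon\|\nabla u\|_{L^2(\Omega)}^2+C_\epsilon\|u\|_{L^2(\Omega)}^2$ applied to the Robin form $a[u]=\|\nabla u\|_{L^2(\Omega)}^2+\alpha\|u\|_{L^2(\partial\Omega)}^2$: these control both $\Re\lambda$ by a function of $\Re\alpha$ and $|\Im\lambda|$ by a function of $|\Im\alpha|$ and the boundary trace ratio $\|u\|_{L^2(\partial\Omega)}/\|u\|_{L^2(\Omega)}$, localising the spectrum in $\C$.

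For part~(2), the numerical-range bound keeps $\Re\lambda$ bounded above whenever $\Re\alpha$ is bounded below; combined with the resulting control on the trace ratio along any sequence of eigenfunctions (forcing it to $0$ when $|\Im\alpha|\to\infty$, since otherwise $|\Im\lambda|\to\infty$), one sees that any limit eigenfunction lies in $H^1_0(\Omega)$. An M-function argument then closes the loop: a limit of an analytic branch of Robin eigenvalues must be a pole of $(M(\lambda)+\alpha)^{-1}$ in the limit, and as $|\alpha|\to\infty$ in a region where $M(\lambda)+\alpha$ is otherwise invertible, these limit poles can only be the (fixed) poles of $M$ itself, i.e.\ the Dirichlet eigenvalues.

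For part~(1), the divergent branches should come from a boundary-layer/WKB quasimode construction in the half-space model: for $\alpha$ with $\Re\alpha\to-\infty$, a cutoff of $\phi(x')\,e^{\alpha\,\mathrm{dist}(x,\partial\Omega)}$, where $\phi$ runs over an $L^2$-orthonormal basis of the boundary, satisfies the Robin condition to leading order and gives $-\Delta u=\alpha^2 u+O(|\alpha|)$ in a boundary layer of width $|\alpha|^{-1}$. On a $C^1$ boundary the mean-curvature and tangential corrections are subleading, yielding infinitely many branches with $\lambda=-\alpha^2+o(\alpha^2)$; on a general Lipschitz boundary, model operators at corners/edges (via Mellin or Kondratiev-type analysis) replace the half-space model and introduce a geometry-dependent prefactor $C\ge1$. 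Completeness — that every divergent branch is accounted for — would follow by showing, via the M-function analysis, that any eigenfunction whose eigenvalue diverges must concentrate on $\partial\Omega$ at the scale $|\alpha|^{-1}$, while bounded branches converge to Dirichlet eigenvalues as in part~(2).

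The main obstacle is clearly part~(1)(ii) on a bare Lipschitz domain: without a pseudodifferential calculus for $M(\lambda)$ and without a smooth normal coordinate near the boundary, the quasimode construction must be performed locally in patches, matched at the singular set, and then upgraded from quasimodes to genuine eigenvalues. The latter step is delicate in a non-self-adjoint setting because quasimode-to-eigenvalue arguments rely on good resolvent norm bounds that can fail for severely non-normal operators, as may be the case here. Moreover, extracting a clean intrinsic expression for the constant $C\ge1$ in terms of, say, the worst local opening angle seems to be genuinely open, and is presumably the reason why even the quantum graph version pursued in this paper is treated as a substantial problem in its own right.
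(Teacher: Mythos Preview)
The statement you have attempted is \emph{Conjecture}~\ref{conj:general}, which the paper explicitly does \emph{not} prove: it is recalled from \cite{BKL} as motivation, and the paper's contribution is the quantum-graph analogue Theorem~\ref{thm:qg-behaviour}. There is therefore no ``paper's own proof'' of this statement to compare your proposal against. Your final paragraph suggests you are at least partly aware of this, since you note that even the quantum graph version is a substantial problem in its own right.

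As a proof sketch for the conjecture itself, your outline identifies the right ingredients --- the $M$-function duality, numerical-range localisation via trace inequalities, and boundary-layer quasimodes --- but, as you yourself flag, it does not close. Two specific gaps beyond those you mention: in part~(2), the numerical-range bound controls $\Re\lambda$ from \emph{below} when $\Re\alpha$ is bounded below, not from above, so it does not by itself prevent eigenvalues from escaping along the positive real axis (the paper's Theorem~\ref{thm:qg-behaviour} deliberately excludes this regime even for graphs); and in part~(1), the step ``quasimode $\Rightarrow$ eigenvalue'' for complex $\alpha$ is exactly where the known real-$\alpha$ techniques (min-max, bracketing) break down, and your proposal offers no replacement mechanism. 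The honest status is that Conjecture~\ref{conj:general} remains open for general Lipschitz domains, and your sketch does not change that.
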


This was proved in \cite{BKL} in the special cases where $\Omega$ is an interval, a $d$-dimensional rectangle and a ball using the duality between the eigenvalues of \eqref{eq:robin-laplacian} and those of appropriate Dirichlet-to-Neumann operators.

Our principal goal here is to study the corresponding problem in the setting of \emph{quantum graphs}, that is, metric graphs on which a differential operator, here the Laplacian, acts, and in particular lend weight to Conjecture~\ref{conj:general} by proving a version of it in this setting. More precisely, we consider compact metric graphs $\G = (\V,\E)$ consisting of a finite set of edges $\E$, each identified with a compact interval, joined together in a certain way at a finite set of vertices $\V$. We then define a differential operator on $\G$ as follows: on each edge we take the negative of the second derivative of the function (i.e., the Laplacian), while we impose certain vertex conditions at the vertices as analogues of the boundary conditions on a domain. We will return to this briefly in a moment (for more details, see also Section~\ref{sec:robin-on-quantum-graphs}, or, e.g., \cite[Chapter~1]{BerkolaikoKuchment}).

Quantum graphs are extremely useful models in spectral theory, as on the one hand such graph Laplacians tend to display complex behaviour characteristic of higher-dimensional Laplacians on domains or manifolds, while on the other hand they are essentially one-dimensional objects and thus more amenable to detailed analyses and even explicit computations, while still being highly non-trivial. This is the case for problems such as Anderson localisation, quantum chaos, the Bethe--Sommerfeld conjecture, or also geometric spectral theory.  However, they also appear frequently as models of a number of phenomena in their own right, in particular the propagation of waves in thin networks at very small scales, e.g., in thin waveguides, quantum wires and carbon nano-structures, among many others. For more information on, and references to, all these topics in the context of quantum graphs, we refer to \cite[Preface and Chapters~1 and ~7]{BerkolaikoKuchment}, as well as, e.g., \cite{BKKM,ExnerTurek,KottosSmilansky}.

In our case, we will study quantum graph Laplacians equipped with a $\delta$-type condition at some or all of the vertices of the graph. This vertex condition, also known as a $\delta$ coupling or $\delta$ interaction, is considered the natural analogue of Robin boundary conditions on domains, and appears frequently in the quantum graph literature (see, e.g., \cite[Section~1.4]{BerkolaikoKuchment} for a description; these conditions featured prominently in \cite{BKKM,BLS,ExnerJex12,ExnerTurek,Hussein,HKS,RiviereRoyer}, among many others). More precisely, we will assume that the functions $f$ in the domain of our operator satisfy 
\begin{enumerate}
\item[(i)] continuity at all vertices $v \in \V$,
\item[(ii)] the $\delta$ condition
\begin{displaymath}
	\sum_{e \sim v_j} \frac{\partial}{\partial\nu} f|_e (v_j) + \alpha_j f(v_j) = 0,
\end{displaymath}
$\alpha_j \in \C$, $j=1,\ldots,k$, at a distinguished set $\V_R := \{v_1,\ldots,v_k\} \subset \V$ of Robin vertices (here $f|_e$ is the restriction of the function $f$ on $\G$ to the edge $e$, $\frac{\partial}{\partial\nu} f|_e (v)$ is the derivative of $f$ at the endpoint of $e$ pointing into $v_j$, and the summation is over all edges $e$ incident with $v_j$), and
\item[(iii)]  the usual Kirchhoff condition (also known as current conservation, see \cite[eq.~(1.4.4)]{BerkolaikoKuchment}), corresponding to $\alpha = 0$, at all vertices in $\V \setminus \V_R$.
\end{enumerate}
For brevity, we will write $-\Delta_{\V_R}^\alpha$ for the corresponding Robin Laplacian defined on $L^2(\G)$, where $\alpha$ is shorthand for the vector $(\alpha_1,\ldots,\alpha_k) \in \C^k$, and note that all eigenvalues of $-\Delta_{\V_R}^\alpha$, which form a countable set for each $\alpha$, are at least piecewise analytic functions of $\alpha \in \C^k$; again, see Section~\ref{sec:robin-on-quantum-graphs} for more details on all of this.

Our main result is a version of Conjecture~\ref{conj:general} for such quantum graphs with $\delta$ conditions: for each vertex $v_j$ for which $\Re \alpha_j \to -\infty$ sufficiently quickly, we obtain a single divergent eigenvalue, corresponding to the principle that this is a one-dimensional perturbation in a certain sense; while if $\Re \alpha_j$ remains bounded from below as $\alpha_j \to \C$, then in the limit we end up with a Dirichlet (zero) condition at the vertex $v_j$. We will denote by $-\Delta_{\V_R}^D$ the corresponding Laplacian which has such a Dirichlet condition at all vertices in $\V_R$ and continuity plus Kirchhoff conditions at $\V \setminus \V_R$. 

\begin{theorem}
\label{thm:qg-behaviour}
Suppose $\G = (\V,\E)$ is a compact metric graph, and for the set of Robin vertices $\V_R = \{v_1,\ldots, v_k \}\subset \V$, suppose that each $v_j \in\V_R$ is equipped with the Robin parameter $\alpha_j \in \mathbb{C}$, $j=1,\ldots,k$, and set $\alpha := (\alpha_1,\ldots,\alpha_k) \in \C^k$. We suppose that for some $m \in \{0,1,\ldots,k\}$
\begin{enumerate}
\item $\alpha_j \to\infty$ in a sector fully contained in the open left half-plane, for all $1\leq j\leq m$;
\item $\alpha_j \to\infty$ in such a way that $\Re\alpha_j$ remains bounded from below as $\alpha_j\to\infty$, for all $m+1\leq j\leq k$.
\end{enumerate}
Then, as $\alpha \to \infty$, counting multiplicities there are exactly $m$ eigenvalues $\lambda$ of $-\Delta_{\V_R}^\alpha$ which diverge away from the positive real semi-axis (that is, whose distance to the positive real semi-axis grows to $\infty$); these satisfy the asymptotics 
\begin{equation}
\label{eq:qg-divergent-ev}
	\lambda = - \frac{\alpha_j^2}{(\deg v_j)^2} + \mathcal{O}\left(\alpha_j^2\e^{\ell_\G\Re\alpha_j}\right)
\end{equation}
as $\alpha \to \infty$, where $\ell_\G$ is the length of the shortest edge of $\G$. Every eigenvalue of $-\Delta_{\V_R}^\alpha$ which does not diverge to $\infty$ in $\C$ converges to an eigenvalue of $-\Delta_{\V_R}^D$.
\end{theorem}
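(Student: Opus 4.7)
The plan is to reduce the eigenvalue problem to a finite-dimensional matrix equation via the Dirichlet-to-Neumann map at the Robin vertices, as announced in the abstract. For $\lambda \in \C \setminus \sigma(-\Delta_{\V_R}^D)$ and boundary data $\phi=(\phi_1,\ldots,\phi_k)\in\C^k$, let $u$ be the unique solution of $-\Delta u=\lambda u$ on $\G$ satisfying $u(v_j)=\phi_j$ at each Robin vertex together with continuity and Kirchhoff at all other vertices, and set $(M(\lambda)\phi)_j := \sum_{e\sim v_j}\partial_\nu u|_e(v_j)$. A standard argument then identifies $\lambda$ as an eigenvalue of $-\Delta_{\V_R}^\alpha$ produced by an eigenfunction not vanishing identically on $\V_R$ if and only if $\det\bigl(M(\lambda)+\diag(\alpha_1,\ldots,\alpha_k)\bigr)=0$; the remaining eigenvalues are those of $-\Delta_{\V_R}^D$ and are independent of $\alpha$. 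Writing $\lambda=-\mu^2$ with $\Re\mu\geq 0$ and solving edgewise in terms of hyperbolic sines and cosines gives $M_{jj}(\lambda) = -\mu \sum_{e\sim v_j}\coth(\mu\ell_e) = -\mu \deg v_j + O\bigl(\mu\,\e^{-\ell_\G \Re\mu}\bigr)$ and $M_{ij}(\lambda) = O\bigl(\mu\,\e^{-\ell_\G \Re\mu}\bigr)$ for $i\neq j$, with the exponential corrections governed by the shortest edge length $\ell_\G$.

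\textbf{Locating the divergent eigenvalues.} For each $j\in\{1,\ldots,m\}$ with $\alpha_j$ inside a fixed sector of the open left half-plane, the leading balance $\alpha_j+M_{jj}(-\mu^2)=0$ suggests the ansatz $\mu_j^\ast:=-\alpha_j/\deg v_j$, whose real part tends to $+\infty$. At $\mu=\mu_j^\ast$, the $j$-th diagonal entry of $M(\lambda)+\diag(\alpha)$ is exponentially small, while for $\ell\neq j$ the $\ell$-th diagonal entry has modulus of order $\max(|\alpha_j|,|\alpha_\ell|)$ (either $\Re\alpha_\ell$ is bounded below and $|\mu_j^\ast|\to\infty$ forces domination by $-\mu_j^\ast\deg v_\ell$, or both $\alpha_\ell$ and $\alpha_j$ are in the sector and $|\alpha_\ell - \alpha_j\deg v_\ell/\deg v_j|$ is itself of order $|\alpha|$). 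An analytic Rouché argument applied to the scalar function $\mu\mapsto\det\bigl(M(-\mu^2)+\diag(\alpha)\bigr)$ on a disc of radius comparable to $|\alpha_j|\,\e^{\ell_\G\Re\alpha_j}$ around $\mu_j^\ast$ then produces exactly one zero, yielding an eigenvalue with the asymptotics \eqref{eq:qg-divergent-ev}. A matching lower bound on $|\det|$ outside the union of these $m$ discs prevents any further eigenvalue from diverging away from the positive real semi-axis, giving the stated count.

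\textbf{Convergence of the remaining eigenvalues to the Dirichlet spectrum.} The numerical range estimates proved earlier in the paper confine all eigenvalues of $-\Delta_{\V_R}^\alpha$ to a region whose intersection with any fixed compact set avoiding the divergent branches above is itself compact, so the eigenvalues not already accounted for accumulate in a bounded subset of $\C$. On any compact subset of $\C\setminus\sigma(-\Delta_{\V_R}^D)$ the matrix $M(\lambda)$ is uniformly bounded, so $\det\bigl(M(\lambda)+\diag(\alpha)\bigr)=\prod_j\alpha_j+O(|\alpha|^{k-1})$ is nonzero once $|\alpha|$ is sufficiently large, and consequently every accumulation point of the remaining eigenvalues must lie in $\sigma(-\Delta_{\V_R}^D)$. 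A local analysis at each Dirichlet eigenvalue $\lambda_0$, where the entries of $M(\lambda)$ acquire simple poles with residues built from the boundary values of the corresponding Dirichlet eigenfunctions, combined with the counting from the previous paragraph, then shows that each Dirichlet eigenvalue is attained with the correct multiplicity in the limit.

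\textbf{Main obstacle.} I expect the delicate point to be the sharp exponential error $\e^{\ell_\G\Re\alpha_j}$ in \eqref{eq:qg-divergent-ev} in the degenerate case where two or more of the ratios $|\alpha_\ell/\deg v_\ell|$ are comparable, so the Rouché discs around distinct $\mu_\ell^\ast$ cluster and may interact. Handling this cleanly will likely require a block Schur-complement decomposition of $M(\lambda)+\diag(\alpha)$ separating the ``fast'' and ``slow'' Robin vertices, together with uniform control of the off-diagonal entries of $M$ across the entire sector in which the $\alpha_j$ are allowed to diverge.
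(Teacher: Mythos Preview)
Your overall strategy---reduce to the Dirichlet-to-Neumann matrix $M(\lambda)$ on $\V_R$, derive its asymptotics, and invert via Rouch\'e---is exactly the route the paper takes. However, there is a genuine gap in your first paragraph: the explicit formula
\[
	M_{jj}(\lambda) = -\mu \sum_{e\sim v_j}\coth(\mu\ell_e)
\]
is only correct when every neighbour of $v_j$ is itself a Robin vertex, i.e.\ essentially when $\V_R=\V$. In the general case $\V_R\subsetneq\V$, the harmonic extension $u$ does \emph{not} vanish at the Kirchhoff vertices, so the Neumann data at $v_j$ cannot be read off edge by edge from the Dirichlet data on $\V_R$ alone. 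Your ``solving edgewise'' step silently assumes Dirichlet data is prescribed at both endpoints of every edge incident to $v_j$, which is false whenever the other endpoint lies in $\V_N$.

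The paper's fix is to introduce the full DtN matrix $M_\V(\lambda)\in\C^{n\times n}$ acting on \emph{all} vertices (for which your edgewise formula is valid), write it in block form
\[
	M_\V(\lambda)=\begin{pmatrix}R & C^T\\ C & K\end{pmatrix}
\]
with respect to the splitting $\V=\V_R\cup\V_N$, and then recover $M(\lambda)=R-C^TK^{-1}C$ as the Schur complement (the Kirchhoff condition at $\V_N$ forces $Cx^R+Kx^N=0$). The crucial point is that although the Schur correction $C^TK^{-1}C$ is not zero, its entries are built from off-diagonal $\csc$-type terms and hence are $O\bigl(\sqrt{\lambda}\,\e^{\mp\ell_\G\Im\sqrt{\lambda}}\bigr)$, so they disappear into your claimed error. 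Thus your asymptotic conclusion for $M_{jj}$ is correct, but the derivation as written is not, and getting from the wrong explicit formula to the right asymptotics requires precisely this Schur-complement step. Once that is in place, the rest of your sketch (Rouch\'e for existence, diagonal dominance for the count, boundedness of $M$ on compacta away from $\sigma(-\Delta_{\V_R}^D)$ for convergence of the remaining eigenvalues) matches the paper closely.
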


Before proceeding, a couple of observations are in order. Firstly, it is natural to ask what happens if the non-Robin vertices are equipped with some other self-adjoint condition(s) than the standard continuity-Kirchhoff ones; while our proof is set up to work only for the latter, we expect certain generalisations would be possible (see Remark~\ref{rem:kirchhoff-in-VN} for more details). Secondly, in the statement of the theorem, we deliberately avoid considering any potential eigenvalues diverging within finite distance of the positive real semi-axis, where the eigenvalues of $-\Delta_{\V_R}^D$ are located and the relationship between $\alpha$ and $\lambda$ is far more complicated (cf.\ \cite[Section~9.1.4]{BKL} for a discussion of what happens in the much simpler but already involved case of the interval); it would go beyond the scope of this note to classify all possible types of behaviour in this case.

In addition to supporting Conjecture~\ref{conj:general}, Theorem~\ref{thm:qg-behaviour} should be of independent interest for quantum graphs, and indeed this serves as a second motivation: as mentioned, such $\delta$ vertex couplings, usually real but sometimes complex, arise frequently in the spectral theory of quantum graphs, where it is useful to understand the behaviour of the eigenvalues and eigenfunctions as functions of $\alpha$ (as used extensively in \cite{BKKM,ExnerJex12}, for example). Basic spectral and generation properties of graph Laplacians with complex $\delta$ conditions in particular were treated extensively in \cite{Hussein,HKS}, and just recently a Weyl law for the asymptotics of the large eigenvalues of star graphs for fixed complex $\alpha$ was established in \cite{RiviereRoyer}.

In fact, Theorem~\ref{thm:qg-behaviour} also represents a certain generalisation of Conjecture~\ref{conj:general} in the sense that $\alpha$ is variable, i.e., may depend on the vertex. This includes as a special case an important prototype model for $\mathscr{P}\!\mathscr{T}$-symmetry originally introduced in \cite{KBZ} and since studied by many authors, often in the context of thin waveguides or layers (see, e.g., \cite{BorisovKrejcirik2,BorisovKrejcirik1,BorisovZnoiil,LotoreichikSiegl,Novak} and the references therein). In \cite{KBZ} the authors consider the Laplacian on a finite interval $(0,d)$ equipped (in our notation) with the Robin condition $-\I t$ at $0$ and $+\I t$ at $d$; in this case, it is possible to calculate the spectrum explicitly, and it turns out that the eigenvalues are exactly the Dirichlet eigenvalues plus an additional eigenvalue $t^2$, diverging along the positive semi-axis (see \cite[Section~3]{KBZ}).

Finally, we note that there is a huge literature on the problem of eigenvalue asymptotics for Laplacians on $\R^d$ subject to $\delta$ (or even so-called $\delta'$) interactions supported on lower-dimensional manifolds of Euclidean space as the interaction becomes strong, closely related to the Robin eigenvalue problem on domains discussed above (e.g., \cite{DEKP,ExnerJex13,EKL} and the references therein); such $\alpha$ are often used to model potentials supported on a lower-dimensional manifold (whence the alternative name ``$\delta$ potential'' for them). And yet on metric graphs, even in the case of real $\alpha$ to the best of our knowledge the asymptotic behaviour in $\alpha$ described by \eqref{eq:qg-divergent-ev} is new. We thus state this case explicitly for the record.

\begin{theorem}
\label{cor:qg-behaviour-neg-alpha}
Keep the assumptions of Theorem~\ref{thm:qg-behaviour}. Suppose now that $\alpha:=\alpha_1=\ldots=\alpha_k$ is real and negative and all vertices in $\V_R$ are equipped with the common Robin parameter $\alpha$, and that $\deg v_1 \leq \deg v_2 \leq \ldots \leq \deg v_k$. Then for $\alpha < -2\max\limits_{j=1,\dots,k}\left\{\frac{\deg v_j}{\ell_j}\right\}$ the self-adjoint operator $-\Delta_{\V_R}^\alpha$ has exactly $k$ negative eigenvalues (here $\ell_j$ is the length of the shortest edge incident with $v_j$). Moreover, for each $j=1,\ldots,k$, the $j$th eigenvalue $\lambda_j = \lambda_j (\alpha)$ behaves like
\begin{equation}
\label{eq:qg-divergent-ev-neg-alpha}
	\lambda_j(\alpha) = - \frac{\alpha^2}{(\deg v_j)^2} + \mathcal{O}\left(\alpha^2\e^{\ell_\G\alpha}\right)
\end{equation}
as $\alpha \to -\infty$. Every other eigenvalue $\lambda_j(\alpha)$, $j \geq k+1$, converges to an eigenvalue of $-\Delta_{\V_R}^D$.
\end{theorem}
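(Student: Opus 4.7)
Since $\alpha$ is real, $-\Delta_{\V_R}^\alpha$ is self-adjoint and its eigenvalues are real. The asymptotic \eqref{eq:qg-divergent-ev-neg-alpha} together with the convergence of the remaining eigenvalues to the Dirichlet spectrum follow directly from Theorem~\ref{thm:qg-behaviour} applied in the special case $m = k$ with common parameter tending to $-\infty$ along the negative real axis (which is certainly a sector strictly contained in the open left half-plane). The only content requiring a separate argument is therefore the quantitative threshold $-2\max_j\{\deg v_j/\ell_j\}$ above which the number of negative eigenvalues is \emph{exactly} $k$; for this I would work directly with the associated quadratic form
\[
\mathfrak{a}_\alpha(f,f) = \int_\G |f'|^2\,\dx + \alpha \sum_{j=1}^k |f(v_j)|^2
\]
via the min--max principle, establishing a lower bound of $k$ and a matching upper bound.

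For the lower bound I would construct explicit tent-shaped test functions $\phi_1,\ldots,\phi_k$, one per Robin vertex, by setting $\phi_j(v_j) = 1$ and letting $\phi_j$ decrease linearly to $0$ along each edge incident with $v_j$ over a distance of exactly $\ell_j/2$, with $\phi_j \equiv 0$ elsewhere. The choice of half the shortest incident edge simultaneously ensures two things: first, each $\phi_j$ vanishes at every vertex other than $v_j$, so that $\mathfrak{a}_\alpha(\phi_j,\phi_j)$ picks up only the single boundary contribution $\alpha|\phi_j(v_j)|^2$ at $v_j$ itself; second, since $\ell_i/2 + \ell_j/2 \leq |e|$ for any edge $e$ joining distinct Robin vertices $v_i,v_j$ (by definition of $\ell_j$ as the length of the shortest edge incident with $v_j$), the supports of distinct $\phi_i,\phi_j$ are disjoint. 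A short computation gives $\int_\G |\phi_j'|^2\,\dx = 2\deg v_j/\ell_j$, so that $\mathfrak{a}_\alpha(\phi_j,\phi_j) = 2\deg v_j/\ell_j + \alpha$. For $\alpha < -2\max_j \deg v_j/\ell_j$ this is strictly negative for every $j$, and by disjointness of supports $\mathfrak{a}_\alpha$ is then negative definite on the $k$-dimensional subspace $\mathrm{span}\{\phi_1,\ldots,\phi_k\}$; min--max therefore yields at least $k$ strictly negative eigenvalues.

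For the matching upper bound I would write $\mathfrak{a}_\alpha = \mathfrak{a}_0 + \alpha \sum_{j=1}^k |f(v_j)|^2$, where $\mathfrak{a}_0$ is the form of the standard Kirchhoff--continuity Laplacian $-\Delta_{\V_R}^0$ obtained by setting $\alpha=0$; its smallest eigenvalue equals $0$, attained by constants on each connected component. Since $\alpha < 0$, the perturbation $\alpha \sum_j |f(v_j)|^2$ is a non-positive form of rank $k$, and the Cauchy-type interlacing inequality for such perturbations gives
\[
\lambda_{k+1}(-\Delta_{\V_R}^\alpha) \geq \lambda_1(-\Delta_{\V_R}^0) = 0,
\]
so at most $k$ eigenvalues of $-\Delta_{\V_R}^\alpha$ can be strictly negative. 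Combining the two bounds produces the claimed count, while the asymptotic ordering in $j$ is automatic from the fact that the divergence rate $\alpha^2/(\deg v_j)^2$ is monotone in $\deg v_j$.

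The main subtlety I anticipate is calibrating the test-function supports so as to recover the sharp constant $2$ in the threshold while simultaneously guaranteeing disjointness of the $\phi_j$ across edges joining pairs of Robin vertices; the half-length choice above handles both issues cleanly, and everything else reduces to standard self-adjoint min--max machinery together with Theorem~\ref{thm:qg-behaviour}.
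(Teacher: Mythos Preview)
Your proposal is correct and follows essentially the same strategy as the paper: disjoint half-star test functions for the lower bound via min--max, and an interlacing argument for the upper bound. The differences are cosmetic. For the lower bound, the paper uses the first Dirichlet eigenfunction on each half-star $\Scal_j^{1/2}$ (invoking a separate lemma that bounds $\lambda_1^D$ of a Robin--Dirichlet star via its secular equation) rather than your explicit linear tents; your computation $\int_\G|\phi_j'|^2=2\deg v_j/\ell_j$ recovers the same threshold more directly and avoids that auxiliary lemma. For the upper bound, the paper interlaces with the Dirichlet Laplacian $-\Delta_{\V_R}^D$ (using that $H^1(\G)/H^1_0(\G,\V_R)$ has dimension $k$) rather than with the Kirchhoff Laplacian $-\Delta_{\V_R}^0$; both are standard rank-$k$ comparisons and suffice.

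One small gap: you assert that the convergence of $\lambda_j(\alpha)$, $j\geq k+1$, to the Dirichlet spectrum follows directly from Theorem~\ref{thm:qg-behaviour}, but that theorem deliberately leaves open the possibility of eigenvalues diverging to $+\infty$ along the positive real semi-axis. In the real self-adjoint case this is easily excluded---either by form monotonicity $\lambda_j(\alpha)\leq\lambda_j(0)$ for $\alpha<0$, or, as the paper does, by the Dirichlet interlacing bound $\lambda_j(\alpha)\leq\lambda_j^D$---but you should supply one of these bounds explicitly rather than defer entirely to Theorem~\ref{thm:qg-behaviour}.
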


We believe the fact that the remaining (non-divergent) eigenvalues converge to the Dirichlet spectrum to be reasonably well known in the real case; in fact, one can show that $\lambda_j (\alpha)$ converges to the $(j-k)$th eigenvalue of $-\Delta_{\V_R}^D$ from above, for any $j\geq k+1$ (see \cite[Theorem~3.1.13]{BerkolaikoKuchment} for the proof when $k=1$; the general case is analogous). We include a short, direct proof of Theorem~\ref{cor:qg-behaviour-neg-alpha}, including the explicit estimate on $\alpha$, for the sake of completeness and concreteness, although the key point here is the asymptotic behaviour of the divergent eigenvalues, a direct corollary of Theorem~\ref{thm:qg-behaviour}. In this vein we draw explicit attention to the presence of the degree of the vertex in the asymptotics: the presence of a coefficient $C<1$ in the leading term asymptotics $-C\alpha^2$ appears to be new, and is at any rate in marked contrast to the known behaviour on domains in $\R^d$. There, in the smooth case the divergent eigenvalues behave like $-\alpha^2$ (as proved in \cite{DanersKennedy,LouZhu}), while the presence of corners at the boundary causes the appearance of eigenvalues behaving like $-C\alpha^2$ for some $C>1$ (as first observed in \cite{LOS} and studied extensively in \cite{Khalile,KhalilePankrashkin,LevitinParnovski}).

In this spirit we will also provide a number of estimates on the location of the eigenvalues, in fact the numerical range of the Robin Laplacian, which for any fixed parameter $\alpha \in \C^k$ bound them within a certain parabolic region of $\C$; in particular, this gives us control over both the real and imaginary parts of any eigenvalues in terms of the real and imaginary parts of $\alpha$ (see Theorems~\ref{thm:numerical-range} and~\ref{thm:imaginary-control} and Corollary~\ref{cor:est-real}). Moreover, these bounds are essentially asymptotically optimal when $\alpha$ has large negative real part; for example, if $\alpha \in (-\infty,0)$ is independent of the $k \geq 1$ Robin vertices, then (keeping the notation and setup from Theorem~\ref{cor:qg-behaviour-neg-alpha}) we obtain the following two-sided bound on the lowest eigenvalue $\lambda_1 (\alpha)$,
\begin{equation}
\label{eq:est-real-two-sided}
	-\frac{\alpha^2}{(\deg v_1)^2} + \frac{\alpha}{\ell_\G \deg v_1} \leq \lambda_1 (\alpha) 
	< \min \left\{-\frac{\alpha^2}{(\deg v_1)^2} - \frac{2\alpha}{\ell_\G \deg v_1} - \frac{1}{\ell_\G^2}, \frac{k\alpha}{|\G|} \right\}
\end{equation}
where we recall that $\deg v_1$ is the minimal degree of all vertices in the Robin vertex set $\V_R$ and $\ell_\G>0$ is the length of the shortest edge in $\G$, and $|\G|$ is the total length of $\G$ (the sum of all edge lengths); for the proof of \eqref{eq:est-real-two-sided} and more details see Corollary~\ref{cor:est-real} and Remark~\ref{rem:est-real}.

The proof of Theorem~\ref{thm:qg-behaviour} is based on the duality between eigenvalues of the Robin Laplacian and those of Dirichlet-to-Neumann-type operators, or more precisely matrices (see Theorem~\ref{thm:bk-duality}), also known as (Titchmarsh--Weyl) $M$-functions; the key to the proof is a well-chosen representation of the latter matrices, which allows a description of their asymptotics as functions of the relevant spectral parameter $\lambda$. In fact, one of the advantages of quantum graphs is that unlike in the case of domains it is possible to derive such more or less explicit formulae for these matrices, and this is what will allow us to give an essentially complete answer to the question of the behaviour of the eigenvalues in the presence of large complex Robin parameters.

This paper is organised as follows. In the preliminary Section~\ref{sec:robin-on-quantum-graphs} we give a brief summary of, and our notation for, metric graphs; and we then introduce the Robin and Dirichlet Laplacians on graphs, the operators with which we will be working. Sections~\ref{sec:dno} and~\ref{sec:dno-asymptotics} are devoted to the proof of Theorem~\ref{thm:qg-behaviour}. We start out in Section~\ref{sec:dno} by introducing Dirichlet-to-Neumann matrices and deriving the representation of them (Lemma~\ref{lem:dno-qg-representation}) that will then allow us to determine their asymptotic behaviour and hence prove Theorem~\ref{thm:qg-behaviour} in Section~\ref{sec:dno-asymptotics}. In Section~\ref{sec:numerical-range} we give the aforementioned estimates on the numerical range of Robin Laplacian and the real and imaginary parts of its eigenvalues; the corresponding proofs are the subject of Section~\ref{sec:numerical-range-proofs}, where we also give the proof of Theorem~\ref{cor:qg-behaviour-neg-alpha}.

\section{The Robin problem on compact quantum graphs}
\label{sec:robin-on-quantum-graphs}

\subsection{On quantum graphs}
\label{subsec:quantum-graphs}
We first need to recall some basic terminology; we refer to the monographs \cite{BerkolaikoKuchment,Mugnolo} or the elementary introduction \cite{Berkolaiko} for more details. A \emph{compact metric graph} $\G = (\V,\mathcal{E})$ consists of a finite vertex set $\V = \{v_1,\ldots,v_n\}$ and a finite edge set $\E = \{e_1,\ldots,e_m\}$, where each edge $e$ is identified with a compact interval $[0,\ell_e]$ of length $\ell_e>0$, denoted by $e\simeq [0,\ell_e]$, and where the endpoints $0$ and $\ell_e$ correspond to the vertices which are incident with the edge $e$. While this implicitly presupposes an orientation on $e$, it is a standard result that up to unitary equivalence the differential operators we will be considering do not depend on this choice of orientation.

We write $e \sim v$ to mean that the vertex $v$ is incident with the edge $e$. The degree of a vertex $v\in\V$, denoted by $\deg v\geq 1$, is the number of edges with which $v$ is incident. We explicitly allow our graphs to have loops (edges both of whose endpoints correspond to the same vertex; in this case the edge is counted twice when computing the degree of the vertex) and we allow multiple edges between any given pair of vertices. Equipped with the usual metric corresponding to the shortest Euclidean path between two points, $\G$ is a compact metric space. The graph is connected if and only if it is connected as a metric space. We will \emph{always} assume $\G$ to be such a connected compact metric graph.

On $\G$, as customary, we can define the space $L^2(\G)$ of square integrable functions, the space $C(\G) \hookrightarrow L^2 (\G)$ of continuous functions and the Sobolev space $H^1 (\G) \hookrightarrow C(\G)$, respectively, by
\begin{displaymath}
\begin{aligned}
	L^2 (\G) &= \bigoplus_{e \in \mathcal{E}} L^2 (e) \simeq \bigoplus_{e \in \mathcal{E}} L^2 ((0,\ell_e)),\\
	C(\G) &= \{ f: \G \to \C \;:\; f|_e \in C(e) \text{ for all } e \in \mathcal{E}\\
	& \qquad\qquad \text{ and $f$ is continuous at each } v \in \V \},\\
	H^1(\G) &= \{f \in C(\G)\;:\; f|_e \in H^1(e) \text{ for all } e \in \mathcal{E} \};
\end{aligned}
\end{displaymath}

we also write
\begin{equation}
\label{eq:integral-graph}
	\int_{\G} f\,\dx := \sum_{e \in \mathcal{E}} \int_e f|_e \,\dx
\end{equation}
for the integral of a function $f$ over $\G$, as well as $\frac{\partial}{\partial\nu} f|_e (v)$ for the derivative of $f$ along the edge $e$ at $v$, pointing into $v$ (which may be thought of as the outer normal derivative to the edge $e$ at $v$); this exists if $f|_e \in C^1 (e)$.

\subsection{The Robin Laplacian: complex $\delta$ couplings}
\label{subsec:robin-laplace}

To define our operator, we first need to identify a distinguished set of vertices, which will be equipped with our Robin-type condition: we fix an arbitrary set $\V_R=\{v_1,\dots,v_k\} \subset \V$ with cardinality $k\leq n:=|\V|$ and a vector $\alpha = (\alpha_1,\dots,\alpha_k) \in \C^k$ with $\alpha_j=\alpha(v_j)$, $j=1,\dots,k$, and define a sesquilinear form $a_\alpha : H^1 (\G) \times H^1 (\G) \to \C$ by
\begin{equation}
\label{eq:qg-form}
	a_\alpha [f,g] := \int_{\G} f' \cdot \overline{g}'\,\dx + \sum_{j=1}^k \alpha_j f(v_j) \overline{g(v_j)}, \qquad f,g\in H^1(\G),
\end{equation}
with the integral defined as in \eqref{eq:integral-graph}. A simple integration by parts shows that the operator on $L^2(\G)$ associated with this form is the Laplacian, i.e., $-\frac{\mathrm{d}^2}{\dx^2}$ on each edge, whose domain consists of those functions $f \in H^1 (\G)$ such that 
\begin{enumerate}
\item $f|_e \in H^2 (e) \hookrightarrow C^1(e)$ for all $e \in \mathcal{E}$, 
\item $f$ is continuous at every vertex $v\in\V$,
\item $f$ satisfies the following vertex conditions:
\begin{enumerate}
\item[(a)] if $v_j \in \V_R$, then
\begin{equation}
\label{eq:qg-robin-condition}
	\sum_{e \sim v_j} \frac{\partial}{\partial\nu} f|_e (v_j) + \alpha_j f(v_j) = 0;
\end{equation}
\item[(b)] if $v_j \in \V \setminus \V_R =: \V_N = \{v_{k+1},\dots,v_n\}$, then
\begin{equation}
\label{eq:qg-kirchhoff-condition}
	\sum_{e \sim v_j} \frac{\partial}{\partial\nu} f|_e (v_j) = 0.
\end{equation}
\end{enumerate}
\end{enumerate}

By way of analogy with its counterparts on domains and manifolds, we will call the unbounded operator on $L^2(\G)$ associated with the form $a_\alpha$ the \emph{Robin Laplacian} (associated with the vertex set $\V_R$ and the coefficient $\alpha$), denoted by $-\Delta_{\V_R}^\alpha$, although as mentioned in the introduction this Robin condition is most commonly known as a $\delta$ condition in the literature. We will also call $\V_R$ the \emph{set of Robin vertices}, consistent with the nomenclature in \cite[Section~1.4.1]{BerkolaikoKuchment}; and we note that condition \eqref{eq:qg-kirchhoff-condition} on non-Robin vertices is the condition usually known as Kirchhoff, which corresponds to the Robin condition \eqref{eq:qg-robin-condition} with $\alpha = 0$. The Kirchhoff condition together with continuity is then known variously in the literature as natural, standard, or even sometimes just Neumann or Neumann--Kirchhoff; it is for this reason that we will use the letter ``$N$'' as an index for the corresponding vertex set $\V_N$. 

Finally, we will say that there is a Dirichlet condition at a vertex $v_j\in \V$ if all functions in the domain of the form or operator are simply equal to zero at $v_j$; no further conditions on the functions are imposed at $v_j$. We will denote by $-\Delta_{\V_0}^D$ the Laplacian satisfying Dirichlet conditions at every vertex in $\V_0\subset\V$ and continuity plus Kirchhoff conditions at all vertices of $\V_N=\V\setminus\V_0$. At the level of sesquilinear forms, the form associated with this operator is given by $a_0$, and its form domain is
\begin{displaymath}
	H^1_0 (\G,\V_0) := \{f \in H^1 (\G): f(v_j) = 0 \text{ for all } v_j \in \V_0 \}.
\end{displaymath}
We will correspondingly write $-\Delta_{\V}^D$ for the Laplacian on $L^2(\G)$ satisfying Dirichlet conditions at every vertex of $\V$, in which case $\G$ decouples to a disjoint union of $m=|\E|$ intervals, each equipped with Dirichlet conditions at the endpoints. We refer in particular to \cite[Section~1.4]{BerkolaikoKuchment} for more details on these operators and vertex conditions.

All the operators $-\Delta_{\V_R}^\alpha$, $-\Delta_{\V_R}^D$ are seen to have compact resolvent (since the embedding of $H^1 (\G)$ into $L^2 (\G)$ is compact), and hence discrete spectrum, for any $\alpha \in \mathbb{C}^k$. For real $\alpha$ or Dirichlet conditions, this is contained in \cite[Theorem~3.1.1]{BerkolaikoKuchment}. For complex $\alpha$, this may be deduced from \cite[Section~3]{BK12} or \cite[Sections~3.5 and~5]{HKS}, or proved directly using the compactness of the embedding $H^1(\G) \hookrightarrow L^2(\G)$ and the fact that $-\Delta_{\V_R}^\alpha$ must have non-empty resolvent set, e.g., by Theorem~\ref{thm:numerical-range}.

As is standard, given any such operator $\A \in \{ -\Delta_{\V_R}^\alpha, -\Delta_{\V_R}^D\}$ we will write $\sigma (\A)$ for its spectrum and $\rho (\A)$ for its resolvent set. For each eigenvalue $\lambda \in \sigma (-\Delta_{\V_R}^\alpha)$, there exists an eigenfunction $\psi \in H^1(\G)$ which satisfies
\begin{equation}
\label{eq:ev-weak-form}
	\int_\G \psi' \cdot \overline{\varphi}'\,\dx + \sum_{j=1}^k \alpha_j \psi(v_j)\overline{\varphi(v_j)} = \lambda \int_\G \psi\overline{\varphi}\,\dx \qquad \text{for all } \varphi \in H^1(\G).
\end{equation}

Throughout, we will assume the connected, compact graph $\G(\V,\E)$ and set $\V_R\subset\V$ of Robin vertices to be fixed. Before continuing, we first note the following basic property of the dependence of the eigenvalues of $-\Delta_{\V_R}^\alpha$ on $\alpha \in \mathbb{C}^k$. 

\begin{lemma}
\label{lem:analytic-eigenvalue-dependence}
The operator family $\A(\alpha)=-\Delta_{\V_R}^\alpha$, $\alpha\in\C$, is self-adjoint holomorphic; in particular, $\A(\alpha)^\ast=\A(\overline{\alpha})$ for all $\alpha \in \C$, and up to possible crossing points, each eigencurve $\lambda(\alpha)$ depends holomorphically on $\alpha$.
\end{lemma}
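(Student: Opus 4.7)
The plan is to recognise $\A(\alpha)$ as the family of m-sectorial operators associated with the sesquilinear forms $a_\alpha$ from \eqref{eq:qg-form}, and invoke Kato's classical theory of holomorphic forms of type (a), which automatically produces a holomorphic family of operators of type (B). This reduces the lemma to verifying three properties of $a_\alpha$: a common form domain $H^1(\G)$ independent of $\alpha$; entire dependence of $\alpha \mapsto a_\alpha[f,g]$ for each fixed $f,g \in H^1(\G)$; and closedness plus sectoriality with sector constants uniform for $\alpha$ in compact subsets of the parameter space. The first two properties are immediate, since the boundary contribution $\sum_j \alpha_j f(v_j)\overline{g(v_j)}$ is linear in $\alpha$.

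The third property is where the only real work lies. Using the standard Sobolev trace inequality on each incident edge $e \sim v$,
\begin{displaymath}
	|f(v)|^2 \leq \varepsilon \|f'\|_{L^2(e)}^2 + C_\varepsilon \|f\|_{L^2(e)}^2, \qquad \varepsilon > 0,
\end{displaymath}
one finds that the boundary term is form-bounded with respect to $a_0$ with arbitrarily small relative bound. This immediately delivers closedness and sectoriality of $a_\alpha$, with both the semi-angle and the vertex of its numerical-range sector controlled uniformly on compact subsets of the parameter space. Kato's theorem then yields the desired holomorphic family of type (B); in particular, the resolvent $(\A(\alpha) - z)^{-1}$ is jointly holomorphic in $(\alpha,z)$ on its natural domain. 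The self-adjoint-family property $\A(\alpha)^* = \A(\overline{\alpha})$ would then follow from the elementary form identity $a_{\overline{\alpha}}[f,g] = \overline{a_\alpha[g,f]}$, which is a direct computation from \eqref{eq:qg-form}, combined with the uniqueness of the m-sectorial operator associated with a closed sectorial form.

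For the holomorphic dependence of the eigencurves, I would invoke the standard Riesz-projection argument: given a simple eigenvalue $\lambda_0$ of $\A(\alpha_0)$, the contour integral of the resolvent around a small loop enclosing $\lambda_0$ produces a rank-one projection $P(\alpha)$ depending holomorphically on $\alpha$ near $\alpha_0$, and $\lambda(\alpha) = \tr(\A(\alpha)P(\alpha))/\tr P(\alpha)$ is then holomorphic. At points where eigenvalues of differing labels coalesce, one only obtains Puiseux-type branches, which accounts for the ``up to possible crossing points'' qualifier. I expect the main technical obstacle to be the uniform sectoriality check; once the trace inequality delivers it, everything else follows by routine application of Kato's perturbation machinery.
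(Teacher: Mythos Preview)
Your proposal is correct and follows essentially the same route as the paper: the paper's proof consists solely of a citation to Kato's Theorems~VII.4.2 and~VII.1.8 and Remark~VII.4.7, which is precisely the type-(B) holomorphic-form machinery you have unpacked in detail. Your verification of the relative form bound via the trace inequality and the form identity $a_{\overline{\alpha}}[f,g] = \overline{a_\alpha[g,f]}$ are exactly the ingredients Kato's framework requires, so there is no substantive difference between the two.
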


For the proof, see \cite[Theorems~VII.4.2 and~VII.1.8, Remark~VII.4.7]{Kato}. Analyticity results can also be found in \cite[Section~3.4]{BK12}.

\section{The Dirichlet-to-Neumann operator}
\label{sec:dno}

We now turn to the proof of Theorem~\ref{thm:qg-behaviour}. It is based on the Dirichlet-to-Neumann operator $M(\lambda)$, cf. \cite[~Sections~2 and 7]{BKL}: given a vector (Dirichlet data) $g \in \C^k \sim \V_R$ and $\lambda \not\in \sigma (-\Delta_{\V_R}^D)$, there exists a unique weak solution $f \in H^1 (\G)$ of the Dirichlet problem
\begin{equation}
\label{eq:dirichlet-data-on-vr}
\begin{aligned}
	-f'' &= \lambda f \quad &&\text{edgewise,}\\
	f|_{\V_R} &= g \quad && \\
	\sum_{e\sim v_j} \frac{\partial}{\partial\nu} f|_e (v_j) &= 0 \qquad &&\text{at all } n-k\text{ vertices } v_j\in \V_N.
\end{aligned}
\end{equation}
The Dirichlet-to-Neumann operator $M(\lambda)$ maps given Dirichlet data $g=f|_{\V_R}$ to the corresponding Neumann data $-\frac{\partial}{\partial\nu} f|_e (v_j)$ of the same solution $f$ of the problem \eqref{eq:dirichlet-data-on-vr}, that is, a map from $\V_R$ to itself. If we fix the order $v_1,\ldots,v_k$ of the vertices in $\V_R$, then $M(\lambda)$ is canonically identifiable with a matrix in $\C^{k\times k}$ (and in future we shall make this identification without further comment). We now wish to analyse this operator in more detail.

We first note that we may assume without loss of generality that $\G$ does not have any loops nor multiple parallel edges (i.e., between any two distinct vertices there is at most one edge); indeed, if this is not the case, then we may insert a new, artificial vertex of degree two in the middle of each affected edge. When these vertices are equipped with continuity and Kirchhoff conditions, the Laplacian on the resulting graph is unitarily equivalent to the one on the unaltered graph (see \cite[Section~3]{BKKM17}), and so the Dirichlet-to-Neumann operator on the unaffected set $\V_R$ of Robin vertices is equally unaffected.

Now let $v_i, v_j \in \V$ be any two distinct vertices and suppose they are joined by a (unique) edge $e_{ij}$ having length $\ell_{ij}>0$. It is known, and a short calculation shows, that the Dirichlet-to-Neumann operator associated with the graph consisting just of this edge (that is, an interval of length $\ell_{ij}$) and the parameter $\lambda \in \C \setminus \{ \pi^2 n^2/\ell_{ij}^2: n \in \N \}$ may be represented by the matrix
\begin{equation}
	\label{eq:dno-one-edge}
M_{e_{ij}}(\lambda)=
	\sqrt{\lambda}\begin{pmatrix}
		-\cot \sqrt\lambda \ell_{ij} & \csc \sqrt\lambda \ell_{ij} \\
		\csc \sqrt\lambda \ell_{ij} & -\cot \sqrt\lambda \ell_{ij}
	\end{pmatrix}.
\end{equation}
Fix $\lambda \in \C$, to be specified precisely later. We denote by $\widetilde{M}_{e_{ij}} \in \C^{n\times n}$ the matrix corresponding to the operator \eqref{eq:dno-one-edge} extended by zero to the $n-k$ other vertices in $\V_N=\V\setminus\V_R$. That is, for fixed $1\leq i,j\leq n$, the $(i,i)$- and $(j,j)$-entries of $\widetilde{M}_{e_{ij}}$ are given by
\begin{equation}
\label{eq:aij}
	\sqrt{\lambda} A_{ij}:= -\sqrt{\lambda} \cot \sqrt\lambda \ell_{ij} ;
\end{equation}
the $(i,j)$- and $(j,i)$-entries of $\widetilde{M}_{e_{ij}}$ are given by
\begin{equation}
\label{eq:bij}
	\sqrt{\lambda}B_{ij}:= \sqrt{\lambda}\csc \sqrt\lambda \ell_{ij},
\end{equation}
and all other entries are zero; that is,

\begin{equation*}
\widetilde{M}_{e_{ij}}(\lambda) = \sqrt{\lambda}
\begin{pmatrix}
0 & \cdots & \cdots & \cdots & 0  \\
\vdots & A_{ij} & 0 & B_{ij} & \vdots  \\
\vdots & 0 & \cdots & 0 & \vdots \\
\vdots & B_{ij} & 0 & A_{ij} & \vdots \\
0 & \cdots & \cdots & \cdots & 0  
\end{pmatrix} \in \C^{n\times n}.
\end{equation*}

We may then represent the Dirichlet-to-Neumann operator $M_{\V} (\lambda)$ acting on \emph{all} vertices of $\G$, that is, $M_{\V} (\lambda) \in \C^{n\times n}$, by summing over all these matrices,
\begin{equation}
\label{eq:dno-all-vertices}
	M_{\V} (\lambda) = \sum_{e\in\E} \widetilde{M}_{e}(\lambda),
\end{equation}
which is well defined as long as $\lambda$ is not in $\sigma(-\Delta_{\V}^D)$, i.e., not in the Dirichlet spectrum of any of the decoupled edges considered as a collection of disjoint intervals. If we set $A_{ij} = B_{ij} = 0$ whenever there is no edge joining $v_i$ and $v_j$, then we may explicitly write the $(i,j)$-entry of $M_{\V} (\lambda)$ as
\begin{equation}
\label{eq:dno-all-vertices-ij-entry}
	\left(M_{\V} (\lambda)\right)_{ij} = \sqrt{\lambda} \begin{cases} \sum_{p=1}^n A_{ip} \qquad &\text{if } i=j,\\
	B_{ij} \qquad &\text{if } i \neq j. \end{cases}
\end{equation}
It is immediate that $M_{\V} (\lambda)$ depends analytically on $\lambda$, with isolated singularities at the discrete set $\sigma(-\Delta_{\V}^D)$.

Importantly, the Dirichlet-to-Neumann matrix $M(\lambda)$ acting on $\V_R$ can be written in a natural way in terms of $M_{\V} (\lambda)$. We recall that $\V$ consists of the (ordered) vertices $v_1,\ldots,v_n$, such that the first $k$ entries $\V_R = \{v_1,\ldots,v_k\}$ are equipped with the Robin boundary condition. With this ordering, we write $M_{\V} (\lambda)$ in block form as
\begin{equation}
\label{eq:dno-block-form}
	M_{\V} (\lambda) = \begin{pmatrix} R & C^T \\ C & K \end{pmatrix},
\end{equation}
where $R \in \C^{k\times k}$ represents the restriction of $M_{\V}$ to the $k$ Robin vertices, $K \in \C^{(n-k) \times (n-k)}$ is the restriction to the remaining $n-k$ (``Kirchhoff'') vertices, and $C \in \C^{(n-k) \times k}$ and its transpose $C^T$ give the interaction (``coupling'') between these two groups of vertices. The following representation is adapted from \cite{DGK}, although we expect it is well known elsewhere.

\begin{lemma}
\label{lem:dno-qg-representation}
With the representation \eqref{eq:dno-block-form}, the matrix $K$ is invertible if and only if $\lambda \not\in \sigma (-\Delta_{\V_R}^D)$. Whenever $K$ is invertible, the operator $M (\lambda)$ is well defined and may be represented in matrix form by
\begin{equation} 
\label{eq:dno-qg-representation}
	M (\lambda) = R - C^T K^{-1} C.
\end{equation}
\end{lemma}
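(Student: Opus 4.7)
The strategy is a Schur-complement-style elimination of the auxiliary Dirichlet data on $\V_N$: the full matrix $M_{\V}(\lambda)$ encodes the Dirichlet-to-Neumann map on the graph decoupled into its individual edges, and one recovers the genuine Dirichlet-to-Neumann matrix $M(\lambda)$ on $\V_R$ by choosing the values at $\V_N$ so that the resulting function satisfies the Kirchhoff condition \eqref{eq:qg-kirchhoff-condition} there. Throughout I would work under the standing assumption $\lambda \not\in \sigma(-\Delta_{\V}^D)$, which is already required for \eqref{eq:dno-all-vertices} and \eqref{eq:dno-block-form} to make sense, and which in particular guarantees that the edgewise solution of $-f'' = \lambda f$ is uniquely determined by its Dirichlet values at the two endpoints of each edge.

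For the representation formula, I would fix $g \in \C^k$ and seek $f \in H^1(\G)$ solving \eqref{eq:dirichlet-data-on-vr} in the form $f|_{\V} = (g,h)^T$, where $h \in \C^{n-k}$ stands for the a priori unknown vector of values of $f$ at the vertices of $\V_N$. By \eqref{eq:dno-all-vertices-ij-entry} together with the block decomposition \eqref{eq:dno-block-form}, the vector of inward-pointing sums of edge derivatives of $f$ at all vertices of $\G$ equals
\[
	\begin{pmatrix} R & C^T \\ C & K \end{pmatrix} \begin{pmatrix} g \\ h \end{pmatrix} = \begin{pmatrix} R g + C^T h \\ C g + K h \end{pmatrix}.
\]
The Kirchhoff condition at every vertex of $\V_N$ is exactly the second block of equations, namely $C g + K h = 0$. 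When $K$ is invertible, this determines $h = - K^{-1} C g$ uniquely, and the corresponding function $f$ is then the unique weak solution of \eqref{eq:dirichlet-data-on-vr}; its Neumann data at $\V_R$ reads $R g + C^T h = (R - C^T K^{-1} C) g$, which by definition of $M(\lambda)$ must coincide with $M(\lambda) g$. This gives \eqref{eq:dno-qg-representation}.

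For the equivalence, suppose first that $K$ is singular and pick $h \neq 0$ with $K h = 0$. The edgewise solution $f$ corresponding to Dirichlet data $(0,h)$ then vanishes on $\V_R$, satisfies the Kirchhoff condition at $\V_N$, and is not identically zero, since $f|_{\V_N} = h \neq 0$; consequently $f$ is an eigenfunction of $-\Delta_{\V_R}^D$ at $\lambda$. Conversely, given such an eigenfunction $f$ I would set $h := f|_{\V_N}$; the Kirchhoff condition at $\V_N$ reduces to $K h = 0$, while the standing hypothesis $\lambda \not\in \sigma(-\Delta_{\V}^D)$ forces $h \neq 0$, because otherwise $f$ would vanish at every vertex and hence, being a solution of $-f''=\lambda f$ with Dirichlet conditions on each edge, would vanish identically. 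The only delicate point in the whole argument is precisely this appeal to $\lambda \not\in \sigma(-\Delta_{\V}^D)$ in the converse direction to exclude the degenerate case $h=0$; beyond that the proof is purely linear-algebraic and I anticipate no substantive obstacle.
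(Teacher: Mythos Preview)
Your proposal is correct and follows essentially the same Schur-complement argument as the paper: both identify the Kirchhoff condition at $\V_N$ with the block equation $Cg+Kh=0$ and read off \eqref{eq:dno-qg-representation} once $K$ is inverted. Your treatment of the equivalence is slightly more explicit than the paper's---you construct the eigenfunction from a kernel element of $K$ directly, whereas the paper argues via non-uniqueness of $x^N$---and your flagging of the standing hypothesis $\lambda\notin\sigma(-\Delta_{\V}^D)$ to rule out $h=0$ in the converse direction is exactly the point the paper handles implicitly.
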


\begin{proof}
Let $x^R\in\C^k\sim\V_R$ be the Dirichlet data $g$ from \eqref{eq:dirichlet-data-on-vr}. If we write $x=(x^R,x^N)^T\in\C^n$ and
\begin{displaymath}
f|_{\V_N} =: (x_{k+1},\dots,x_n)^T=:x^{N} \in \C^{n-k} \simeq \V_N,
\end{displaymath}
which is well defined and thus uniquely determined by $x^{R}=(x_1,\dots,x_k)^T\in\C^k$ since $\lambda \not\in \sigma (-\Delta_{\V_R}^D)$, then by construction
\begin{equation}
\label{eq:blockmatrix-rep-dno-proof}
	\begin{pmatrix} R & C^T \\ C & K \end{pmatrix}\begin{pmatrix} x^{R} \\ x^{N} \end{pmatrix} =
	\begin{pmatrix} \sum_{e\sim v} \frac{\partial}{\partial\nu} f|_e (v) \\ 0 \end{pmatrix} =
	\begin{pmatrix} M(\lambda) x^{N} \\ 0 \end{pmatrix} \in\C^n.
\end{equation}
That is, $M(\lambda)x^{N} = Rx^{R} + C^Tx^{N}$, where $Cx^{R} + Kx^{N} = 0$. Since $x^{N}$ is uniquely determined by $x^{R}\in \C^k$ arbitrary, we must have that $K$ is invertible, $x^{N} = -K^{-1}Cx^{R}$, and thus \eqref{eq:dno-qg-representation} follows, if $\lambda \not\in \sigma (-\Delta_{\V_R}^D)$. If on the other hand $\lambda \in \sigma (-\Delta_{\V_R}^D)$, then since $x^{N}$ is no longer uniquely determined by $x^{R}$ in general (if $\psi$ is an eigenfunction of $-\Delta_{\V_R}^D$, then $x^{N}+\psi|_{\V_N}$ is also a solution), $K$ cannot be invertible.
\end{proof}
We can now state the central duality result linking the eigenvalues of $M(\lambda)$ and $-\Delta_{\V_R}^\alpha$. Here we will suppose that the vector $\alpha = (\alpha_1,\ldots,\alpha_k) \in \C^k$ is given and assume that the vertex $v_j \in \V_R = \{v_1,\ldots,v_k\}$ is equipped with the Robin parameter $\alpha_j$; for brevity we will then write
\begin{displaymath}
	I_\alpha := \diag \{ \alpha_1, \ldots, \alpha_k \} \in \C^{k \times k}.
\end{displaymath}
The next statement is well known in the case of real $\alpha \in \R$ (see \cite[Theorem~3.5.2]{BerkolaikoKuchment}); the proof in the complex vector case $\alpha \in \C^k$ is identical, and we omit it.

\begin{theorem}
	\label{thm:bk-duality}
Let $\lambda\in\rho(-\Delta_{\V_R}^D)$. Then $\lambda\in\sigma(-\Delta_{\V_R}^\alpha)$ if and only if
\begin{displaymath}
\det(M(\lambda)-I_\alpha)=0.
\end{displaymath}
\end{theorem}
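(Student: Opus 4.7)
The plan is to exploit the defining property of the Dirichlet-to-Neumann matrix $M(\lambda)$ in Lemma~\ref{lem:dno-qg-representation}: the sum of signed normal derivatives at each Robin vertex $v_j \in \V_R$ of any solution of $-f''=\lambda f$ with Kirchhoff conditions at $\V_N$ is a linear function of the boundary trace $g := f|_{\V_R}$, and is given by $M(\lambda) g$. Once this is in place, the $\delta$ condition \eqref{eq:qg-robin-condition} at the Robin vertices translates, in the paper's sign convention for $\partial/\partial\nu$, into the algebraic identity $M(\lambda) g = I_\alpha g$. The proof of the theorem then reduces to a two-way correspondence between eigenfunctions of $-\Delta_{\V_R}^\alpha$ and nonzero kernel vectors of $M(\lambda) - I_\alpha$. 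The hypothesis $\lambda \in \rho(-\Delta_{\V_R}^D)$ is exactly what makes $M(\lambda)$ well defined and guarantees well-posedness of the interior Dirichlet problem \eqref{eq:dirichlet-data-on-vr}.

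For the forward direction, I take an eigenfunction $\psi$ of $-\Delta_{\V_R}^\alpha$ at $\lambda$ and set $g := \psi|_{\V_R} \in \C^k$. The edgewise equation, continuity at all vertices, and the Kirchhoff condition at $\V_N$ mean that $\psi$ solves \eqref{eq:dirichlet-data-on-vr} with data $g$. If $g$ were zero, the uniqueness part of Lemma~\ref{lem:dno-qg-representation} (valid precisely because $\lambda \notin \sigma(-\Delta_{\V_R}^D)$) would force $\psi \equiv 0$, contradicting the fact that $\psi$ is an eigenfunction; so $g \neq 0$. Rewriting the $\delta$ condition at each $v_j$ in terms of $M(\lambda)$ then yields $(M(\lambda) - I_\alpha) g = 0$, and hence $\det(M(\lambda) - I_\alpha) = 0$.

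For the converse, assume $\det(M(\lambda) - I_\alpha) = 0$ and pick a nonzero $g \in \C^k$ with $M(\lambda) g = I_\alpha g$. Using $\lambda \in \rho(-\Delta_{\V_R}^D)$, I let $\psi \in H^1(\G)$ be the unique solution of the interior Dirichlet problem \eqref{eq:dirichlet-data-on-vr} with data $g$, which automatically lies edgewise in $H^2$, is continuous at all vertices, and satisfies Kirchhoff at $\V_N$. The identity $M(\lambda) g = I_\alpha g$, read through the definition of $M(\lambda)$, is exactly the $\delta$ condition \eqref{eq:qg-robin-condition} at each $v_j \in \V_R$, so $\psi$ lies in the domain of $-\Delta_{\V_R}^\alpha$ and satisfies $-\Delta_{\V_R}^\alpha \psi = \lambda \psi$; since $g \neq 0$ we have $\psi \not\equiv 0$, so $\lambda \in \sigma(-\Delta_{\V_R}^\alpha)$. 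The only delicate point is to keep the sign convention for $\partial/\partial\nu$ (and hence for $M$) consistent with \eqref{eq:dno-one-edge} and the form definition \eqref{eq:qg-form}, so that the Robin condition appears as $M(\lambda) g = I_\alpha g$ rather than $M(\lambda) g = -I_\alpha g$; beyond this, both implications are essentially bookkeeping built on Lemma~\ref{lem:dno-qg-representation}.
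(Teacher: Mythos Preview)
Your argument is correct and is precisely the standard proof one expects here. Note that the paper actually \emph{omits} the proof of this theorem entirely, referring to \cite[Theorem~3.5.2]{BerkolaikoKuchment} for the real case and stating that the complex vector case is identical; your write-up supplies exactly that omitted argument. One small quibble: the uniqueness you invoke for the interior Dirichlet problem is stated in the text immediately preceding Lemma~\ref{lem:dno-qg-representation} (the sentence introducing \eqref{eq:dirichlet-data-on-vr}) rather than in the lemma itself, so the cross-reference is slightly off, but the mathematical content is sound.
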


\section{Asymptotics of the Dirichlet-to-Neumann operator}
\label{sec:dno-asymptotics}

We now investigate what happens to $M(\lambda)$ when $\lambda\to\infty$. We first note the following trivial but useful implication of Lemma~\ref{lem:dno-qg-representation}.

\begin{lemma}
\label{lem:dno-qg-poles}
The Dirichlet-to-Neumann matrix $M(\lambda)$ is a meromorphic function of $\lambda$. It is well defined for all $\lambda \in \rho (-\Delta_{\V_R}^D)$, and each $\lambda \in \sigma (-\Delta_{\V_R}^D)$ is a pole of finite order of $M(\lambda)$.
\end{lemma}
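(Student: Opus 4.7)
The plan is to leverage the representation $M(\lambda) = R(\lambda) - C^T(\lambda)\,K(\lambda)^{-1}C(\lambda)$ from Lemma~\ref{lem:dno-qg-representation} together with the very explicit form of the entries of $M_{\V}(\lambda)$.

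First I would verify that each entry of $M_{\V}(\lambda)$---that is, each function $\sqrt{\lambda}\cot(\sqrt{\lambda}\ell_{ij})$ in \eqref{eq:aij} and $\sqrt{\lambda}\csc(\sqrt{\lambda}\ell_{ij})$ in \eqref{eq:bij}---is a single-valued meromorphic function of $\lambda$ on $\C$. This is immediate from the fact that both $\cos(\sqrt{\lambda}\ell)$ and $\sin(\sqrt{\lambda}\ell)/\sqrt{\lambda}$ are entire functions of $\lambda$ (their Taylor series being power series in $\lambda$ alone), and the two functions in question are ratios built from these; their only poles are simple and occur at $\{(n\pi/\ell_{ij})^2:n\in\N\}\subset\sigma(-\Delta_{\V}^D)$. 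By \eqref{eq:dno-all-vertices}, $M_{\V}(\lambda)$ is therefore a meromorphic matrix-valued function on $\C$ with finite-order (in fact simple) poles contained in $\sigma(-\Delta_{\V}^D)$, and so too are its blocks $R,C,K$ from \eqref{eq:dno-block-form}.

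Next I would use Lemma~\ref{lem:dno-qg-representation} to pass to $M$ itself: since $K(\lambda)$ is invertible for every $\lambda\in\rho(-\Delta_{\V_R}^D)$, in particular $\det K(\lambda)\not\equiv 0$, and Cramer's rule gives $K(\lambda)^{-1}=\operatorname{adj}(K(\lambda))/\det K(\lambda)$ as a meromorphic matrix-valued function on $\C$. Substituting back into \eqref{eq:dno-qg-representation} then yields a meromorphic extension of $M$ to the whole plane, whose poles a priori lie in the union of $\sigma(-\Delta_{\V}^D)$ and the (isolated) zero set of $\det K(\lambda)$, and are of finite order because they arise as an algebraic combination of meromorphic functions with finite-order singularities.

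Finally, to locate the poles precisely in $\sigma(-\Delta_{\V_R}^D)$, I would invoke the a priori holomorphy of $M(\lambda)$ on $\rho(-\Delta_{\V_R}^D)$: for each $\lambda$ there, the Dirichlet problem \eqref{eq:dirichlet-data-on-vr} has a unique weak solution depending holomorphically on $\lambda$ and linearly on the data, so $M(\lambda)$ is bounded and analytic on $\rho(-\Delta_{\V_R}^D)$ by construction. Combined with uniqueness of meromorphic continuation, this forces every apparent singularity of $R-C^T K^{-1}C$ at a point of $\sigma(-\Delta_{\V}^D)\setminus\sigma(-\Delta_{\V_R}^D)$ to be removable. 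The main (and really only) subtlety is exactly this cancellation of spurious singularities at points where the individual blocks of $M_{\V}$ blow up but the Dirichlet problem remains well-posed; it is handled for free by the a priori analyticity just described. The remaining poles lie in $\sigma(-\Delta_{\V_R}^D)$ and have finite order, completing the proof.
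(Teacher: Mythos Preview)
Your approach is correct and matches the paper's, which simply states the lemma as a ``trivial but useful implication of Lemma~\ref{lem:dno-qg-representation}'' without giving any further argument; you have supplied the details the paper omits, and your use of the representation \eqref{eq:dno-qg-representation} together with the explicit meromorphy of the entries \eqref{eq:aij}--\eqref{eq:bij} is exactly what is intended.

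One small point: as written, your argument shows that the singularities of $M(\lambda)$ are \emph{contained in} $\sigma(-\Delta_{\V_R}^D)$ and are at worst finite-order poles, but you do not verify that every $\lambda_0\in\sigma(-\Delta_{\V_R}^D)$ is in fact a genuine pole rather than a removable singularity. The paper does not address this either, and it is not needed anywhere in the sequel (only the holomorphy of $M$ on $\rho(-\Delta_{\V_R}^D)$ and its meromorphy overall are used, e.g.\ in the proof of Theorem~\ref{thm:dno-qg-asymptotics-duality}). If you wanted to close this gap, one route is to note that an eigenfunction $\psi$ of $-\Delta_{\V_R}^D$ cannot have all its normal derivatives vanish on $\V_R$ unless it is also a Kirchhoff eigenfunction, and to trace through how a nonzero normal derivative forces a nontrivial residue of $M$ at $\lambda_0$; but this is a refinement beyond what the paper itself provides.
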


For a vector $z=(z_1,\dots,z_k)\in\C^k$ we denote by $\GM(z)$ the smallest of the moduli of its components $z_j$, $j=1,\dots,k$, i.e.
\begin{displaymath}
\GM(z) = \min_{j=1,\dots,k} |z_j|.
\end{displaymath}
Using the duality between the eigenvalues $\lambda \in \sigma (-\Delta_{\V_R}^\alpha)$ of the Robin Laplacian and the eigenvalues $\alpha \in \sigma (M(\lambda))$ of the Dirichlet-to-Neumann matrix from Theorem~\ref{thm:bk-duality}, we obtain

\begin{theorem}
	\label{thm:dno-qg-asymptotics-duality}
For any compact graph $\G$ and any bounded set $\Omega\subset\C$ such that
\begin{displaymath}
\dist(\Omega,\sigma(-\Delta_{\V_R}^D))>0
\end{displaymath}
there exists a number $\hat\alpha>0$ depending only on $\Omega$, $\G$, and $\V_R$ such that
\begin{displaymath}
\sigma(-\Delta_{\V_R}^\alpha)\cap \Omega=\emptyset
\end{displaymath}
for all $\alpha = (\alpha_1,\ldots,\alpha_k) \in \C^k$ such that $\GM(\alpha)>\hat\alpha$.
\end{theorem}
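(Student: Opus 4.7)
The natural strategy is to use Theorem~\ref{thm:bk-duality} to convert this spectral question about the (non-self-adjoint) operator $-\Delta_{\V_R}^\alpha$ into a finite-dimensional estimate on the Dirichlet-to-Neumann matrix. Because by assumption $\Omega \subset \rho(-\Delta_{\V_R}^D)$, Theorem~\ref{thm:bk-duality} says that $\lambda \in \sigma(-\Delta_{\V_R}^\alpha) \cap \Omega$ if and only if there is a nonzero vector $v \in \C^k$ with $M(\lambda) v = I_\alpha v$. The task thus reduces to excluding such a vector, uniformly in $\lambda \in \Omega$, as soon as the minimum modulus $\GM(\alpha)$ is large enough.

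\textbf{Uniform boundedness of $M$ on $\Omega$.} First I would invoke Lemma~\ref{lem:dno-qg-poles}: $M(\lambda)$ is meromorphic in $\lambda$ with poles only at the discrete set $\sigma(-\Delta_{\V_R}^D)$. Since $\Omega$ is bounded and at strictly positive distance from $\sigma(-\Delta_{\V_R}^D)$, its closure $\overline{\Omega}$ is compact and $M$ is holomorphic on an open neighbourhood of $\overline{\Omega}$. Hence
\begin{displaymath}
C := \sup_{\lambda \in \overline{\Omega}} \|M(\lambda)\|_{\mathrm{op}} < \infty,
\end{displaymath}
where $\|\cdot\|_{\mathrm{op}}$ is the operator norm on $\C^{k \times k}$ with respect to the Euclidean norm on $\C^k$. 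Crucially, this constant depends only on $\Omega$, $\G$ and $\V_R$, as $M$ itself is determined by $\G$ and $\V_R$ alone.

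\textbf{The matrix estimate that finishes the proof.} I would set $\hat\alpha := C$ and argue by contradiction. Suppose $\GM(\alpha) > \hat\alpha$ and that some $\lambda \in \Omega$ lies in $\sigma(-\Delta_{\V_R}^\alpha)$. Then Theorem~\ref{thm:bk-duality} supplies a nonzero $v = (v_1,\ldots,v_k) \in \C^k$ with $M(\lambda)v = I_\alpha v$. Exploiting the diagonal structure of $I_\alpha$,
\begin{displaymath}
\|I_\alpha v\|_2^2 \;=\; \sum_{j=1}^k |\alpha_j|^2 |v_j|^2 \;\geq\; \GM(\alpha)^2\, \|v\|_2^2,
\end{displaymath}
whereas $\|M(\lambda) v\|_2 \leq C \|v\|_2$ by the choice of $C$. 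Combining these two estimates yields $\GM(\alpha) \leq C = \hat\alpha$, contradicting the hypothesis $\GM(\alpha) > \hat\alpha$.

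\textbf{Expected main obstacle.} The argument is genuinely short: the only analytic step is the uniform bound on $\|M(\lambda)\|_{\mathrm{op}}$ over $\Omega$, and this follows immediately from the meromorphicity of $M$ (Lemma~\ref{lem:dno-qg-poles}) together with compactness. What is conceptually essential, and what singles out $\GM(\alpha)$ rather than, say, $\max_j |\alpha_j|$ as the relevant quantity, is that the diagonal form of $I_\alpha$ makes the \emph{smallest} component of $\alpha$ give a lower bound on $\|I_\alpha v\|_2 / \|v\|_2$; an upper bound on $\|M(\lambda)\|_{\mathrm{op}}$ then suffices to close the argument.
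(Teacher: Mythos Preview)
Your proof is correct and follows essentially the same approach as the paper: both arguments reduce via Theorem~\ref{thm:bk-duality} to showing that $M(\lambda)-I_\alpha$ is injective for all $\lambda\in\Omega$ once $\GM(\alpha)$ is large, and both rely on the uniform bound $\sup_{\lambda\in\Omega}\|M(\lambda)\|<\infty$ obtained from meromorphicity (Lemma~\ref{lem:dno-qg-poles}) together with the fact that the smallest singular value of the diagonal matrix $I_\alpha$ is $\GM(\alpha)$. The only cosmetic difference is that the paper factorises $M(\lambda)-I_\alpha=I_\alpha\bigl(I_{\alpha^{-1}}M(\lambda)-I\bigr)$ and invokes a Neumann series to conclude invertibility, whereas you compare $\|M(\lambda)v\|$ and $\|I_\alpha v\|$ directly; these are equivalent formulations of the same estimate.
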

This theorem immediately implies the following dichotomy.
\begin{corollary}
	\label{cor:dno-qg-asymptotics-duality}
Suppose $\GM(\alpha) \to \infty$ and $\lambda = \lambda (\alpha)$ is an analytic branch of eigenvalues of $-\Delta_{\V_R}^\alpha$. Then either $\lambda \to \infty$ in $\C$ or $\lambda$ converges to a point in $\sigma (-\Delta_{\V_R}^D)$ as $\GM(\alpha) \to \infty$.
\end{corollary}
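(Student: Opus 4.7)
The plan is to derive this dichotomy directly from Theorem~\ref{thm:dno-qg-asymptotics-duality} together with the discreteness of $\sigma(-\Delta_{\V_R}^D)$ and the continuity of the analytic branch $\lambda(\alpha)$ from Lemma~\ref{lem:analytic-eigenvalue-dependence}. The argument is essentially a compactness/contradiction argument: Theorem~\ref{thm:dno-qg-asymptotics-duality} tells us that large-$\GM(\alpha)$ Robin eigenvalues must avoid every bounded region which keeps positive distance from the Dirichlet spectrum, so if a branch fails to diverge, the only places its values can accumulate are on the Dirichlet spectrum itself.

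More concretely, I would first suppose that $\lambda(\alpha)$ does \emph{not} tend to $\infty$ as $\GM(\alpha) \to \infty$; then there exists a sequence $\alpha^{(n)} \in \C^k$ with $\GM(\alpha^{(n)}) \to \infty$ along which $\lambda(\alpha^{(n)})$ remains in some bounded set of $\C$. By Bolzano--Weierstrass, some subsequence converges to a point $\lambda_0 \in \C$, and the key step is to show $\lambda_0 \in \sigma(-\Delta_{\V_R}^D)$. Assume for contradiction that $\lambda_0 \notin \sigma(-\Delta_{\V_R}^D)$; since the Dirichlet spectrum is a closed discrete subset of $\C$ (the Dirichlet Laplacian having compact resolvent), there is a small closed disk $\Omega$ about $\lambda_0$ with $\dist(\Omega,\sigma(-\Delta_{\V_R}^D)) > 0$. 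Theorem~\ref{thm:dno-qg-asymptotics-duality} then produces $\hat\alpha > 0$ such that $\sigma(-\Delta_{\V_R}^\alpha) \cap \Omega = \emptyset$ whenever $\GM(\alpha) > \hat\alpha$, which contradicts the fact that $\lambda(\alpha^{(n)}) \in \Omega$ for all large $n$ along the convergent subsequence.

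Thus every subsequential limit of $\lambda(\alpha)$ lies in the discrete set $\sigma(-\Delta_{\V_R}^D)$. To upgrade this to convergence to a single point, I would use that the branch $\lambda(\alpha)$ is continuous in $\alpha$ by Lemma~\ref{lem:analytic-eigenvalue-dependence}: separating two distinct points of $\sigma(-\Delta_{\V_R}^D)$ by disjoint closed disks whose common exterior is an annular region of positive distance from the Dirichlet spectrum, any jump of $\lambda(\alpha)$ between neighbourhoods of two distinct Dirichlet eigenvalues would force $\lambda(\alpha)$ to pass through this exterior region, again in violation of Theorem~\ref{thm:dno-qg-asymptotics-duality} for large $\GM(\alpha)$. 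Hence along the branch, $\lambda(\alpha)$ is eventually trapped in a small neighbourhood of exactly one $\mu \in \sigma(-\Delta_{\V_R}^D)$, and the bound $\dist(\lambda(\alpha),\sigma(-\Delta_{\V_R}^D)) \to 0$ (obtained from Theorem~\ref{thm:dno-qg-asymptotics-duality} applied to a shrinking family of annuli around $\mu$) forces $\lambda(\alpha) \to \mu$. The only mildly delicate point is this last passage from ``accumulation inside the discrete Dirichlet spectrum'' to ``convergence to a single Dirichlet eigenvalue,'' but once continuity of the branch is invoked this becomes essentially automatic.
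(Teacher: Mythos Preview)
Your proposal is correct and follows the same route as the paper: the paper simply declares that the corollary is an immediate consequence of Theorem~\ref{thm:dno-qg-asymptotics-duality} (indeed, the proof environment is headed ``Proof of Theorem~\ref{thm:dno-qg-asymptotics-duality} and hence of Corollary~\ref{cor:dno-qg-asymptotics-duality}''), and your compactness/continuity argument is precisely the standard way to unpack that implication. You have supplied more detail than the paper does, including the passage from subsequential accumulation to genuine convergence via connectedness of $\{\GM(\alpha)>\hat\alpha\}$, which the paper leaves implicit.
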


\begin{proof}[Proof of Theorem~\ref{thm:dno-qg-asymptotics-duality} and hence of Corollary~\ref{cor:dno-qg-asymptotics-duality}]
For $\GM(\alpha)>0$ we consider the invertibility of 
\begin{equation}\label{eq:I_alpha_new}
M(\lambda)-I_\alpha = I_\alpha\left(I_{\alpha^{-1}} M(\lambda)- I\right),
\end{equation}
where $I_{\alpha^{-1}} := \diag \{ \alpha_1^{-1}, \ldots, \alpha_k^{-1} \}$ is well defined (since $\GM(\alpha)>0$) and satisfies $I_{\alpha^{-1}}=I_\alpha^{-1}$ by definition. Now the matrix $M(\lambda) \in \C^{k \times k}$ is a meromorphic function of $\lambda$ with singularities at $\sigma (-\Delta_{\V_R}^D)$ (see Lemma~\ref{lem:dno-qg-poles}), and hence its norm is uniformly bounded on $\Omega \subset\!\subset \rho(-\Delta_{\V_R}^D)$. Hence for each such $\Omega$ there exists a constant $c_\Omega>0$ independent of $\alpha\in\C^k$ such that
\begin{displaymath}
\begin{aligned}
	\sup_{\lambda\in \Omega}\left\| I_{\alpha^{-1}} M(\lambda)\right\|_{\C^k\rightarrow \C^k} 
	&\leq \sup_{\lambda\in \Omega} \left\|I_{\alpha^{-1}}\right\|_{\C^k\rightarrow \C^k}\left\|M(\lambda)\right\|_{\C^k\rightarrow \C^k} \\
	&= c_\Omega\left\|I_{\alpha^{-1}}\right\|_{\C^k\rightarrow \C^k} \;\longrightarrow\; 0
\end{aligned}
\end{displaymath}
as $\GM(\alpha)\rightarrow\infty$. This convergence implies that $\left(I_{\alpha^{-1}} M(\lambda)- I\right)$ is invertible (as a Neumann series) and the right-hand side of \eqref{eq:I_alpha_new} is, too. That is, there exists a constant $\hat\alpha>0$ such that 
\begin{displaymath}
	\GM(\alpha) >\hat\alpha \quad\Longrightarrow\quad M(\lambda)-I_\alpha \text{ is invertible.}
\end{displaymath}
In particular, the kernel of $M(\lambda)-I_\alpha$ is trivial for $\GM(\alpha)>\hat{\alpha}$, and thus there exist no eigenvalues of the Robin Laplacian in $\Omega$ by Theorem~\ref{thm:bk-duality}.
\end{proof}

It remains to analyse what divergent behaviour is possible, and under what circumstances. To this end, we use the representations \eqref{eq:dno-all-vertices-ij-entry}, \eqref{eq:dno-block-form} and \eqref{eq:dno-qg-representation} together with ideas drawn from \cite[Section~2]{BKL} for the interval.

We first note that since the coefficients $A_{ij}$ and $B_{ij}$ given by \eqref{eq:aij} and \eqref{eq:bij}, respectively, are periodic in $\Re \sqrt{\lambda}$, we only need to consider the case $\Im \sqrt{\lambda} \to \pm \infty$, in which case we have the asymptotics 
\begin{equation}
\label{cot-asymptotics}
	\cot z = \I \left(1+ \frac{2}{\e^{2\I z}-1} \right) = \mp \I + \mathcal{O} \left( \e^{\mp 4\Im z}\right)
\end{equation}
and
\begin{equation}
\label{csc-asymptotics}
	\csc z = \frac{2\I}{\e^{\I z} - \e^{-\I z}} = \mathcal{O} \left(\e^{\mp 2 \Im z}\right)
\end{equation}
as $\Im z \to \pm \infty$, independently of $\Re z$. For $z=\frac{\ell_{ij}}{2}\sqrt\lambda$ this gives the following asymptotic expansion of $M(\lambda) \in \C^{k\times k}$. In what follows, for brevity we will set 
\begin{align*}
D&:=\diag\{\deg v_1, \ldots, \deg v_k\}\in \N^{k \times k},\\
\tilde{D}&:=\diag\{\deg v_{k+1}, \ldots, \deg v_n\}\in \N^{(n-k)\times(n-k)}.
\end{align*}

\begin{lemma}
\label{lem:qg-dno-asymptotics}
Suppose $\lambda \to \infty$ in $\C$ in such a way that $\Im \sqrt{\lambda} \to \pm\infty$, and recall the definition $\ell_\G := \min\{\ell_e : e\in\E\} > 0$. Then $M(\lambda)$ has the asymptotic expansion
\begin{equation}
\label{eq:qg-dno-asymptotics}
	M(\lambda) = \pm \I \sqrt{\lambda} D + \mathcal{O}\left(\sqrt{\lambda}\e^{\mp \ell_\G \Im \sqrt{\lambda}}\right).
\end{equation}
\end{lemma}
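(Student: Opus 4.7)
The plan is to combine the block representation from Lemma~\ref{lem:dno-qg-representation} with the scalar asymptotics \eqref{cot-asymptotics}--\eqref{csc-asymptotics} entry by entry, and then treat the Schur complement term perturbatively. Concretely, I would work with the formula
\begin{displaymath}
    M(\lambda) = R - C^T K^{-1} C,
\end{displaymath}
and first derive sharp asymptotics for each of the three block matrices $R\in\C^{k\times k}$, $K\in\C^{(n-k)\times(n-k)}$ and $C\in\C^{(n-k)\times k}$ obtained from the decomposition \eqref{eq:dno-block-form} of $M_\V(\lambda)$.

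First I would compute $M_\V(\lambda)$ entrywise via \eqref{eq:dno-all-vertices-ij-entry}. For the diagonal entry at a vertex $v_i$, the sum $\sqrt\lambda\sum_{p} A_{ip}$ runs over exactly $\deg v_i$ neighbouring edges, and for each such edge \eqref{cot-asymptotics} gives
\begin{displaymath}
    A_{ip} = -\cot(\sqrt\lambda\,\ell_{ip}) = \pm\I + \mathcal{O}\bigl(\e^{\mp 2\ell_{ip}\Im\sqrt\lambda}\bigr) = \pm\I + \mathcal{O}\bigl(\e^{\mp \ell_\G \Im\sqrt\lambda}\bigr),
\end{displaymath}
since $\ell_{ip}\geq \ell_\G>0$. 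Similarly, \eqref{csc-asymptotics} yields $B_{ij} = \mathcal{O}(\e^{\mp \ell_\G \Im\sqrt\lambda})$ whenever $v_i$ and $v_j$ are adjacent. Summing and collecting, I obtain
\begin{displaymath}
    M_\V(\lambda) = \pm \I\sqrt{\lambda}\,\diag(\deg v_1,\ldots,\deg v_n) + \mathcal{O}\bigl(\sqrt\lambda \e^{\mp \ell_\G\Im\sqrt\lambda}\bigr),
\end{displaymath}
where the error matrix is controlled in any fixed matrix norm. Reading off the blocks then gives $R = \pm\I\sqrt\lambda D + E_R$, $K = \pm\I\sqrt\lambda \tilde D + E_K$ and $C = E_C$, with $\|E_R\|, \|E_K\|, \|E_C\| = \mathcal{O}(\sqrt\lambda \e^{\mp \ell_\G\Im\sqrt\lambda})$.

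Next I would invert $K$ by writing $K = \pm\I\sqrt\lambda\tilde D\bigl(I + F\bigr)$ with $F := (\pm\I\sqrt\lambda\tilde D)^{-1} E_K$, so that $\|F\| = \mathcal{O}(\e^{\mp \ell_\G\Im\sqrt\lambda})\to 0$ under the hypothesis $\Im\sqrt\lambda\to\pm\infty$. For $|\Im\sqrt\lambda|$ large, $I+F$ is invertible by a Neumann series, with $(I+F)^{-1} = I + \mathcal{O}(\e^{\mp \ell_\G\Im\sqrt\lambda})$, and so
\begin{displaymath}
    K^{-1} = \mp \tfrac{\I}{\sqrt\lambda}\tilde D^{-1} + \mathcal{O}\bigl(\tfrac{1}{\sqrt\lambda}\e^{\mp \ell_\G\Im\sqrt\lambda}\bigr) = \mathcal{O}\bigl(\tfrac{1}{\sqrt\lambda}\bigr).
\end{displaymath}
Combining these estimates, the Schur complement satisfies
\begin{displaymath}
    \|C^T K^{-1} C\| = \mathcal{O}\bigl(\sqrt\lambda\e^{\mp 2\ell_\G\Im\sqrt\lambda}\bigr),
\end{displaymath}
which is dominated by the error in $R$. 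Hence
\begin{displaymath}
    M(\lambda) = R - C^T K^{-1} C = \pm\I\sqrt\lambda D + \mathcal{O}\bigl(\sqrt\lambda \e^{\mp \ell_\G\Im\sqrt\lambda}\bigr),
\end{displaymath}
as claimed.

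The only mildly delicate point is the uniform invertibility of $K$: one needs to ensure that $|\Im\sqrt\lambda|$ is large enough that the Neumann series for $(I+F)^{-1}$ converges, which is automatic from the assumption $\Im\sqrt\lambda\to\pm\infty$ once one fixes a threshold. Everything else is routine bookkeeping: the degree of the Robin vertex emerges from the fact that exactly $\deg v_i$ leading $\pm\I$ terms are summed in the diagonal entry $(M_\V)_{ii}$, the edges running into Kirchhoff vertices contribute only through the subleading Schur complement, and all error exponents are uniformly bounded by $\e^{\mp \ell_\G \Im\sqrt\lambda}$ thanks to the uniform lower bound $\ell_e\geq \ell_\G$.
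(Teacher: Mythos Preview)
Your proposal is correct and follows essentially the same approach as the paper: both use the Schur complement representation $M(\lambda)=R-C^TK^{-1}C$, expand $R$, $C$, $K$ entrywise via the $\cot$/$\csc$ asymptotics, and then bound $C^TK^{-1}C$ after showing $K^{-1}=\mathcal{O}(1/\sqrt\lambda)$. Your Neumann-series justification for the invertibility of $K$ is exactly what the paper's ``easy argument'' alludes to.
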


\begin{proof}[Proof of Lemma~\ref{lem:qg-dno-asymptotics}]
Recall the matrices $R$, $C$ and $K$ introduced in \eqref{eq:dno-block-form}. Then the expression \eqref{eq:dno-all-vertices-ij-entry} for the coefficients of these matrices plus the asymptotics
\begin{displaymath}
	A_{ij} = \pm \I + \mathcal{O}\left(\e^{\mp 2\ell_{ij} \Im \sqrt{\lambda}}\right), \qquad
	B_{ij} = \mathcal{O}\left(\e^{\mp \ell_{ij} \Im \sqrt{\lambda}}\right)
\end{displaymath}
as $\Im \sqrt{\lambda} \to \pm \infty$, respectively, which follow from \eqref{cot-asymptotics} and \eqref{csc-asymptotics}, imply that
\begin{displaymath}
	R = \pm \I \sqrt{\lambda} D + \mathcal{O} \left(\sqrt{\lambda} \e^{\mp \ell_\G \Im \sqrt{\lambda}}\right),
\end{displaymath}
as well as
\begin{displaymath}
	C,\, C^T = \mathcal{O} \left(\sqrt{\lambda} \e^{\mp \ell_\G \Im \sqrt{\lambda}}\right)
\end{displaymath}
and
\begin{displaymath}
	K = \pm \I \sqrt{\lambda} \tilde{D} + \mathcal{O} \left(\sqrt{\lambda} \e^{\mp \ell_\G \Im \sqrt{\lambda}}\right).
\end{displaymath}
From the latter, we obtain via an easy argument that $K^{-1} = \mathcal{O}(1/\sqrt{\lambda})$ and hence also
\begin{equation}
\label{eq:CTK-1C}
	C^TK^{-1}C = \mathcal{O}\left(\sqrt{\lambda} \e^{\mp 2\ell_\G \Im \sqrt{\lambda}}\right).
\end{equation}
Combined with the asymptotic expansion for $R$ and the representation \eqref{eq:dno-qg-representation} of $M(\lambda)$, this immediately yields \eqref{eq:qg-dno-asymptotics}.
\end{proof}

As a corollary of Lemma~\ref{lem:qg-dno-asymptotics} we obtain that the $k$ eigenvalues $\alpha_1, \ldots, \alpha_k$ of $M(\lambda)$ satisfy
\begin{equation}
\label{eq:qg-dno-eigenvalue-asymptotics}
	\alpha_j = \pm \I \sqrt{\lambda} \deg v_j + \mathcal{O}\left(\sqrt{\lambda}\e^{\mp \ell_\G \Im \sqrt{\lambda}}\right)
\end{equation}
as $\Im \sqrt\lambda \to \pm \infty$, $j=1,\ldots,k$; in fact, convergence of the corresponding eigenvectors of $M(\lambda)$ to those of $D$, that is, to the standard basis of $\C^k$, also follows, but we will not need this. In other words, if the spectral parameter $\lambda \to \infty$ in such a way that its distance to the positive real semi-axis tends to $\infty$ (corresponding to $\Im \sqrt\lambda \to \pm \infty$), then, counting possible multiplicities, we obtain $k$ curves $\alpha_j = \alpha_j (\lambda)$, $j=1,\ldots,k$, each described asymptotically by the formula \eqref{eq:qg-dno-eigenvalue-asymptotics}. To prove Theorem~\ref{thm:qg-behaviour}, it remains to ``invert'' these asymptotics, that is, express these curves as functions of $\alpha_j$. For this part of the argument, we may essentially appeal to the proof given in \cite[Section~9.1.3]{BKL} for the corresponding statement on the interval.

\begin{proof}[Proof of Theorem~\ref{thm:qg-behaviour}]
We assume that $\alpha=(\alpha_1,\dots,\alpha_k) \to \infty$ in $\C^k$ and recall the two cases 
\begin{enumerate}
\item $\alpha_j \to\infty$ in a sector fully contained in the open left half-plane, for all $1\leq j\leq m$;
\item $\alpha_j \to\infty$ such that $\Re\alpha_j$ remains bounded from below as $\alpha_j\to\infty$, for all $m+1\leq j\leq k$.
\end{enumerate}
We wish to show that for each $\alpha_1,\dots,\alpha_m\to\infty$ there exists a corresponding eigenvalue $\lambda_j=\lambda(\alpha_j)$ (here and throughout the proof we understand ``eigenvalue'' to mean ``analytic curve of eigenvalues'') which behaves as asserted and that these $m$ distinct eigenvalues $\lambda_1,\dots,\lambda_m$ are the only ones which diverge away from the positive real semi-axis.

Suppose first that $\lambda$ is such an eigenvalue diverging away from the positive real semi-axis; then necessarily $\Im\sqrt{\lambda}\to\pm\infty$. By \eqref{eq:qg-dno-eigenvalue-asymptotics} we obtain $k$ eigenvalues of $M(\lambda)$, $\alpha_1,\ldots,\alpha_k$, behaving like $\alpha_j \sim \pm\I\sqrt{\lambda}\deg v_j$.

Now fix $j=1,\dots,m$. By the same inversion argument based on Rouch\'e's theorem that was used in \cite[Section~9.1.3]{BKL}, there exists an eigenvalue $\lambda=\lambda(\alpha_j)$ of $-\Delta_{\V_R}^\alpha$ satisfying $\lambda\sim -\alpha_j^2/(\deg v_j)^2$ and which has the asymptotical error term $\mathcal{O}\left(\alpha_j^2 \e^{\ell_G\Re\alpha_j}\right)$ as $\alpha_j\to\infty$. Since this works for each $\alpha_j$ which diverges as described in (1), we arrive at $m$ divergent eigenvalues, each of which satisfies \eqref{eq:qg-divergent-ev}.

Suppose now that there is an additional, $(m+1)$st divergent eigenvalue $\lambda = \lambda (\alpha)$ which satisfies $\Im \sqrt\lambda \to \pm \infty$. Then, again, the matrix $M(\lambda)$ has $k$ eigenvalues satisfying \eqref{eq:qg-dno-eigenvalue-asymptotics}. By assumption, $\lambda$ is not an eigenvalue of $-\Delta_{\V_R}^\alpha$ corresponding to the $m$ curves found above, that is, it does not correspond to $\alpha_1,\ldots,\alpha_m$. Hence, applying the same inversion procedure, there must be some $j_0 \in \{m+1,\ldots,k\}$ such that $\lambda$ corresponds to the eigenvalue $\alpha_{j_0} \leftrightarrow \lambda$ described asymptotically by \eqref{eq:qg-dno-eigenvalue-asymptotics}. But now a short argument shows that the condition $\Im \sqrt\lambda \to \pm \infty$ together with the relation \eqref{eq:qg-dno-eigenvalue-asymptotics} implies that necessarily $\Re\alpha_{j_0} \to -\infty$ as $\lambda \to \infty$. This contradicts the assumption (2), and we conclude that no such divergent eigenvalue $\lambda$ can exist which is not already among the $m$ found above.

Finally, we already know from Corollary~\ref{cor:dno-qg-asymptotics-duality} that each eigenvalue of $-\Delta_{\V_R}^\alpha$ which does not diverge to $\infty$ converges to some eigenvalue of the Dirichlet Laplacian $-\Delta_{\V_R}^D$ as $\alpha \to \infty$.
\end{proof}

\begin{remark}
\label{rem:kirchhoff-in-VN}
Let us finish by discussing the role of the standard (continuity-Kirchhoff) conditions that were assumed to hold on the non-Robin vertex set $\V_N = \V \setminus \V_R$. There are two key places where these conditions enter: the block matrix representation of Lemma~\ref{lem:dno-qg-representation}, and the subsequent asymptotics of the Dirichlet-to-Neumann operator (Lemma~\ref{lem:qg-dno-asymptotics}, in particular \eqref{eq:CTK-1C}).

Suppose that the functions in the domain of $-\Delta_{\V_R}^\alpha$ should satisfy other (local) vertex conditions at $v\in\V_N$. This would, in general, result in a different matrix $\widetilde{K}$ in the representations \eqref{eq:dno-block-form} and \eqref{eq:blockmatrix-rep-dno-proof}, but also, more importantly, the last $n-k$ components $Cx^R+\widetilde{K} x^N$ of \eqref{eq:blockmatrix-rep-dno-proof} may no longer vanish, meaning that $x^N$ may not be easily expressible as a function of $x^R$: any vertex condition such that $Cx^R+\widetilde{K}x^N$ depends on $f$ (e.g. any Robin condition) may result in $x^N$ no longer being uniquely determined by $x^R$ and $\widetilde K$ not being invertible. Thus for the proof used here to work, we require the vertex condition to satisfy $Cx^R+\widetilde{K}x^N=0$.

In addition, to obtain the correct asymptotic behaviour \eqref{eq:qg-dno-asymptotics} of the Dirichlet-to-Neumann matrix, we require that $C^T\widetilde{K}^{-1}C$ not influence the leading order of the asymptotics of $M(\lambda)$, which in turn requires that $\widetilde K$ not decrease too rapidly as $\Im\sqrt{\lambda}\to\pm\infty$.

We will not explore further the question of what other vertex conditions might satisfy these two conditions.
\end{remark}

\section{Estimates on the numerical range and the eigenvalues}
\label{sec:numerical-range}

Here we wish to complement the asymptotic behaviour of the divergent eigenvalues described by Theorem~\ref{thm:qg-behaviour} with concrete estimates on the location of the eigenvalues. We will present three sets of results which, while perhaps not surprising, give a fairly complete picture of the location spectrum for every fixed $\alpha$. We first consider the location of the so-called numerical range; we recall that for $\V_R=\{v_1,\dots,v_k\}\subset\V$ and the corresponding parameter vector $\alpha=(\alpha_1,\dots,\alpha_k)\in\C^k$ (with $\alpha_j=\alpha(v_j)$) the \emph{numerical range} of the form $a_\alpha$ given by \eqref{eq:qg-form} is, by definition, the set
\begin{displaymath}
	W(a_\alpha) = \{ a_\alpha [f,f]: \|f\|_2 = 1\} 
	= \left\{ \int_\G |f'|^2\,\dx + \sum_{j=1}^k \alpha_j |f(v_j)|^2 : \, \int_\G |f|^2\,\dx = 1\right\} \subset \C,
\end{displaymath}
and that clearly every eigenvalue of the operator $-\Delta_{\V_R}^\alpha$ is in $W(a_\alpha)$. Our first results give an estimate on the location of the set $W(a_\alpha)$ in the complex plane, analogous to those of \cite[Section~6]{BKL} for the complex Robin Laplacian on a domain in $\R^d$. This leads to bounds on the real part of the eigenvalues which are, in particular, sharp up to the first term of the asymptotics as $\alpha \to \infty$ in $\C^k$. In addition to these bounds, we also consider more precise estimates on the imaginary part of the eigenvalues afterwards.

For the numerical range, we consider the case of $\alpha \in \C^k$ and the case of vertex-independent $\alpha_1=\ldots=\alpha_k=:\alpha \in \C$ separately. Notationally, for the fixed set $\V_R=\{v_1,\dots,v_k\}$ of Robin vertices we will always write
\begin{displaymath}
	\MD := \min_{j=1,\dots,k}\deg v_j .
\end{displaymath}
We also recall that $\ell_\G = \min \{\ell_e : e \in \E\}$ is the length of the shortest edge in $\G$. The proofs of all the following statements will be deferred to Section~\ref{sec:numerical-range-proofs}.

\begin{theorem}[The numerical range]
\label{thm:numerical-range}
\begin{enumerate}
\item Let $\alpha \in \C^k$. Then the numerical range $W(a_\alpha)$, and in particular every eigenvalue of $-\Delta_{\V_R}^\alpha$, is contained in the set
\begin{equation}
\label{eq:numerical-range-variable-alpha}
	\Lambda_{\G,\alpha} := \left\{ t+\sum_{j=1}^k\alpha_j s_{j} \in \C : t\geq 0, \;s_{j}\in \left[0,\frac{2}{\MD}\sqrt{\tau_j} + \frac{2}{\MD\ell_\G}\right]\right\},
\end{equation}
where the numbers $0\leq \tau_j \leq t$ satisfy $\sum_{j=1}^k \tau_j\leq t$.
\item If $\alpha_1=\ldots=\alpha_k=:\alpha \in \C$ is independent of $j=1,\ldots,k$, then $W(a_\alpha)$ is contained in
\begin{equation}
\label{eq:numerical-range-constant-alpha}
	\Lambda_{\G,\alpha} := \left\{ t+\alpha\cdot s \in \C : t\geq 0, s\in \left[0,\frac{2}{\MD}\sqrt{t} + \frac{1}{\MD \ell_\G}\right]\right\}.
\end{equation}
\end{enumerate}
\end{theorem}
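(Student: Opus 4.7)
The plan is to reduce both claims to a local trace-type estimate applied at each Robin vertex. For any $f \in H^1(\G)$ with $\|f\|_{L^2(\G)} = 1$, set $t := \int_\G |f'|^2\,\dx \geq 0$ and $s_j := |f(v_j)|^2 \geq 0$; then by definition $a_\alpha[f,f] = t + \sum_{j=1}^k \alpha_j s_j$, which already has the shape appearing in \eqref{eq:numerical-range-variable-alpha} and \eqref{eq:numerical-range-constant-alpha}. The two parts of the theorem are therefore equivalent to suitable upper bounds on the $s_j$ individually (for part~(1)), or on $\sum_j s_j$ (for part~(2)), in terms of $t$.

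The key tool will be a half-edge trace inequality on an interval: for $g \in H^1(0,\ell)$, integrating the identity $|g(0)|^2 = |g(y)|^2 - 2\int_0^y \Re(\bar g g')\,\rd z$ over $y\in(0,\ell/2)$ and swapping the order of integration yields
\[
|g(0)|^2 \;\leq\; \tfrac{2}{\ell}\,\|g\|_{L^2(0,\ell/2)}^2 + 2\,\|g\|_{L^2(0,\ell/2)}\,\|g'\|_{L^2(0,\ell/2)},
\]
whose right-hand side only involves the half of the interval closest to the evaluation point. I would apply this to each of the $\deg v_j$ edges meeting $v_j\in\V_R$, on the half incident to $v_j$, and sum; using $\ell_e \geq \ell_\G$ together with a further Cauchy--Schwarz in the cross term produces
\[
(\deg v_j)\,s_j \;\leq\; \tfrac{2}{\ell_\G}\,\sigma_j + 2\sqrt{\sigma_j\,\tau_j},
\]
where $\sigma_j$ and $\tau_j$ are the $L^2$-norms squared of $f$ and $f'$, respectively, over the union of the half-edges near $v_j$.

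Now the crucial observation is that as $j$ varies over $\{1,\ldots,k\}$ the half-edge neighbourhoods of the $v_j$ are pairwise disjoint, since each half of an edge contains at most one endpoint in $\V_R$; consequently $\sum_j \sigma_j \leq \|f\|_2^2 = 1$ and $\sum_j \tau_j \leq \|f'\|_2^2 = t$. For part~(1) I would use the pointwise bound $\sigma_j \leq 1$ together with $\deg v_j \geq \MD$ to obtain $s_j \leq \frac{1}{\MD}\bigl(2\sqrt{\tau_j} + \tfrac{2}{\ell_\G}\bigr)$, placing $a_\alpha[f,f]$ inside $\Lambda_{\G,\alpha}$ as described. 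For part~(2), in which only the sum $\sum_j s_j$ enters (because the $\alpha_j$ coincide), I would instead sum the vertex inequalities over $j$ before discarding and invoke Cauchy--Schwarz in the form $\sum_j \sqrt{\sigma_j\tau_j}\leq (\sum_j\sigma_j)^{1/2}(\sum_j\tau_j)^{1/2}\leq \sqrt{t}$ before dividing by $\MD$.

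The main technical point, and the reason the argument stays uniform in $k$, is this disjointness of the half-edge neighbourhoods; with full edges one would pay a factor of two whenever two Robin vertices shared an edge, and the clean constants in $\Lambda_{\G,\alpha}$---which in turn drive the two-sided bound \eqref{eq:est-real-two-sided}---would deteriorate. Calibrating the constant in front of $1/\ell_\G$ in \eqref{eq:numerical-range-constant-alpha} is therefore the most delicate step, but it follows once the half-edge partition is set up correctly and the Cauchy--Schwarz step is applied to the summed, rather than the individual, inequalities.
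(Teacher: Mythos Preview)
Your approach to part~(1) is essentially the paper's: both of you pass to half-stars $\Scal_j^{1/2}$ to guarantee pairwise disjointness, and your averaged fundamental-theorem-of-calculus identity is a minor variant of the paper's cut-off function argument. That part is fine.

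The gap is in part~(2). Summing your half-edge inequalities and applying Cauchy--Schwarz exactly as you describe gives
\[
\MD\sum_{j=1}^k s_j \;\leq\; \sum_{j=1}^k (\deg v_j)\,s_j \;\leq\; \frac{2}{\ell_\G}\sum_j\sigma_j + 2\sum_j\sqrt{\sigma_j\tau_j}
\;\leq\; \frac{2}{\ell_\G} + 2\sqrt{t},
\]
so that $s \leq \tfrac{2}{\MD}\sqrt{t} + \tfrac{2}{\MD\ell_\G}$, not $\tfrac{2}{\MD}\sqrt{t} + \tfrac{1}{\MD\ell_\G}$ as the theorem claims. The factor of $2$ in the zeroth-order term is intrinsic to the half-edge route: it comes from $\ell_{\Scal_j^{1/2}} \geq \ell_\G/2$, and no Cauchy--Schwarz refinement on the cross term will remove it.

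Ironically, the fix is precisely to use \emph{full} edges for part~(2), contrary to your final paragraph. The paper works on the union $\G_0$ of full spanning stars and uses cut-off functions $\varphi_j$ forming a partition of unity on any edge $e$ with both endpoints $v_i,v_j\in\V_R$: since $\varphi_i+\varphi_j\equiv 1$ and $\varphi_i'+\varphi_j'\equiv 0$ on such an $e$, the two zeroth-order contributions $\|f\|_e^2\,\varphi_i'$ and $\|f\|_e^2\,\varphi_j'$ cancel exactly, giving $|f(v_i)|^2+|f(v_j)|^2 \leq 2\|f\|_e\|f'\|_e$ with no $\|f\|_e^2/\ell_e$ term at all. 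Only edges with a single Robin endpoint contribute a $1/\ell_e$ term, and each such edge is counted once, yielding $1/(\MD\ell_\G)$. Your half-edge partition trades this cancellation for disjointness and so cannot recover the sharp constant.
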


Note that if $\Re\alpha_j= \Re \alpha (v_j) \geq 0$ for all $v_j \in \V_R$, then $\Re \lambda \geq 0$ automatically as well, whereas if the components of $\Re \alpha$ are all negative or of indefinite sign, then $\Re\lambda$ may be negative. The set $\Lambda_{\G,\alpha}$ is depicted in Figure~\ref{fig:SimpleCurve} in the simple case that $\alpha \in \C$ with $\Re\alpha, \Im\alpha > 0$.

\begin{figure}[h]
  \includegraphics[scale=0.25]{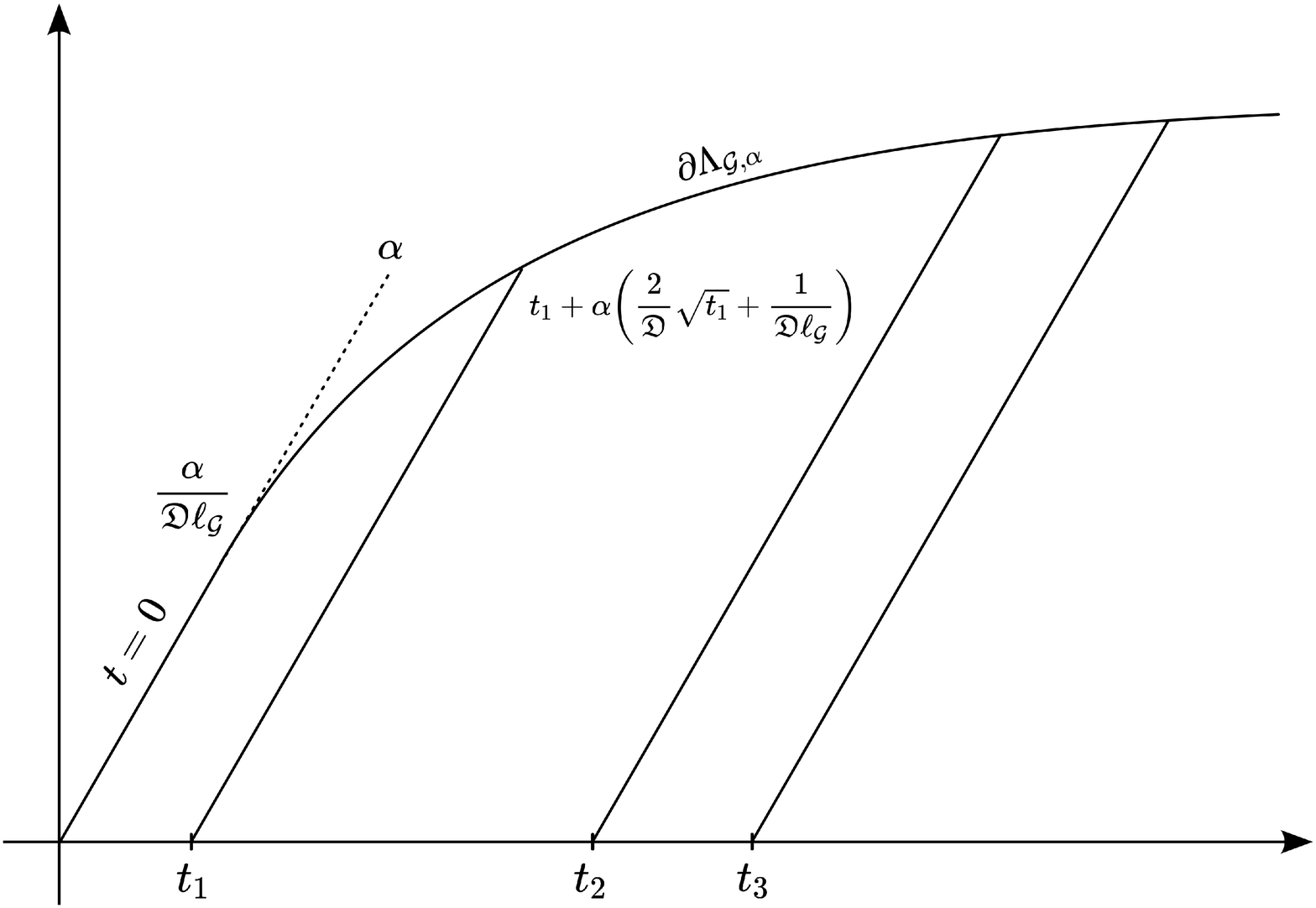}
  \caption{The set $\Lambda_{\G,\alpha}$ from Theorem~\ref{thm:numerical-range}(2), which contains the numerical range $W(a_\alpha)$, for a representative choice of $\Re \alpha >0$ and $\Im \alpha >0$, corresponding to the region between the curve $\partial\Lambda_{\G,\alpha}$ and the real axis. The region is composed of the union of segments of the form $\{ t+\alpha\cdot s \in \C: s\in [0,2\sqrt{t}/\MD+1/\MD\ell_G] \}$, each of slope $\Im\alpha/\Re\alpha$, for different values of $t \geq 0$; the parallel lines show these segments for selected values of $t_1,t_2, t_3 > 0$. Their endpoints form a parabolic section of $\partial\Lambda_{\G,\alpha}$ open to the right.}
  \label{fig:SimpleCurve}
\end{figure}

We now turn to the estimates on the real part of the eigenvalues announced above, which also demonstrate the asymptotic optimality of the bounds on $\Lambda_{\G,\alpha}$ (see Remark~\ref{rem:est-real}). For simplicity, in what follows, we will assume that $\alpha_1 = \ldots = \alpha_k =: \alpha \in \C$ is independent of $j=1,\ldots,k$; a similar statement holds in the general case.

\begin{corollary}[The real part of the eigenvalues]
\label{cor:est-real}
Let $\alpha \in \C$ such that $\Re \alpha < 0$. Then any eigenvalue $\lambda \in \sigma(-\Delta_{\mathcal{V}_R}^\alpha)$ satisfies
\begin{equation}
\label{eq:est-real}
	\Re \lambda \geq - \frac{(\Re\alpha)^2}{\MD^2} + \frac{\Re \alpha}{\MD\ell_\G}.
\end{equation}
\end{corollary}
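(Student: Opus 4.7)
The plan is to deduce the corollary from the inclusion $W(a_\alpha) \subset \Lambda_{\G,\alpha}$ furnished by Theorem~\ref{thm:numerical-range}(2). Since every eigenvalue lies in the numerical range, it suffices to minimise $\Re z$ over $z \in \Lambda_{\G,\alpha}$, i.e.\ to compute
\begin{displaymath}
\inf_{(t,s)} \Re(t + \alpha s) = \inf_{(t,s)} \bigl(t + s \Re\alpha\bigr),
\end{displaymath}
where the infimum is taken over $t \geq 0$ and $s \in \bigl[0, \tfrac{2}{\MD}\sqrt{t} + \tfrac{1}{\MD \ell_\G}\bigr]$.

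First I would use the sign hypothesis $\Re\alpha < 0$ to observe that, for each fixed $t \geq 0$, the expression $t + s \Re\alpha$ is a decreasing affine function of $s$, so its minimum on the allowed interval is attained at the right endpoint
\begin{displaymath}
s = s(t) := \frac{2}{\MD}\sqrt{t} + \frac{1}{\MD \ell_\G}.
\end{displaymath}
Substituting, the problem reduces to the one-variable minimisation
\begin{displaymath}
\inf_{t \geq 0} \left( t + \frac{2\Re\alpha}{\MD}\sqrt{t} + \frac{\Re\alpha}{\MD\ell_\G} \right).
\end{displaymath}

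Next I would set $u = \sqrt{t} \in [0,\infty)$ and rewrite the objective as the real quadratic
\begin{displaymath}
\phi(u) = u^2 + \frac{2\Re\alpha}{\MD}\, u + \frac{\Re\alpha}{\MD \ell_\G}.
\end{displaymath}
Since $\Re\alpha < 0$, the vertex $u_\ast = -\Re\alpha/\MD > 0$ lies in the admissible range, and elementary completion of the square yields
\begin{displaymath}
\phi(u_\ast) = -\frac{(\Re\alpha)^2}{\MD^2} + \frac{\Re\alpha}{\MD \ell_\G},
\end{displaymath}
which matches the claimed lower bound exactly. Taking real parts in $\lambda \in \Lambda_{\G,\alpha}$ and applying this minimum then gives \eqref{eq:est-real}. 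No real obstacle arises here: the argument is a one-line application of Theorem~\ref{thm:numerical-range}(2) followed by a routine quadratic optimisation, and in fact the computation simultaneously shows that the bound is attained at a boundary point of $\Lambda_{\G,\alpha}$, reflecting the asymptotic sharpness advertised in Remark~\ref{rem:est-real}.
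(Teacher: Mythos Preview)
Your proposal is correct and follows exactly the same route as the paper: invoke Theorem~\ref{thm:numerical-range}(2), use $\Re\alpha<0$ to push $s$ to its right endpoint, and then minimise the resulting expression in $t$ (equivalently in $u=\sqrt{t}$), finding the optimum at $t=(\Re\alpha)^2/\MD^2$. Your write-up is in fact slightly more explicit about the quadratic optimisation than the paper's, which simply states that ``a short calculation'' gives the minimiser.
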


\begin{remark}
\label{rem:est-real}
Theorem~\ref{thm:qg-behaviour} implies the existence of an eigenvalue behaving like $-\alpha^2/{\MD^2}$ as $\Re\alpha \to -\infty$, meaning that the first term in \eqref{eq:est-real} is correct in this regime. Actually, in the case of real negative $\alpha$ a test function argument can be used to give a complementary upper bound on the smallest (real) eigenvalue $\lambda_1 (\alpha) := \min \sigma(-\Delta_{\V_R}^\alpha)$; namely, we have
\begin{equation}
\label{eq:est-real-upper}
	\lambda_1 (\alpha) \leq \begin{cases} -\frac{\alpha^2}{\MD^2} - \frac{2\alpha}{\MD \ell_\G} - \frac{1}{\ell_\G^2} = -\left[\frac{\alpha}{\MD} + \frac{1}{\ell_\G}\right]^2
	\qquad &\text{if } \alpha < -\frac{\MD}{\ell_\G} < 0,\\
	\frac{k\alpha}{|\G|} \qquad &\text{for all } \alpha < 0,\end{cases}
\end{equation}
where $k = |\mathcal{V}_R|$ is the number of Robin vertices and $|\G|$ is the total length of $\G$; we will prove \eqref{eq:est-real-upper} in Section~\ref{sec:numerical-range-proofs}. Regarding the second term, we observe that as $\alpha \to -\infty$, we have $\lambda_1 (\alpha) = -\alpha^2/\MD^2 + o (\alpha^{-\infty})$; while as $\alpha \to 0$, since $\lambda_1'(0) = 1/|\G|$ (see \cite[Proposition~3.1.6]{BerkolaikoKuchment} and use that the eigenfunctions for $\lambda_1(0)=0$ are constant),
\begin{displaymath}
	\lambda_1(\alpha) = \frac{k\alpha}{|\G|} + \mathcal{O}(\alpha^2) \qquad \text{as } \alpha \to 0.
\end{displaymath}
Hence there can be no ``correct'' coefficient $c \in \R$ of $\alpha$ in any (upper or lower) bound of the form $-\alpha^2/\MD^2 + c\alpha$ which is valid for all $\alpha<0$ and asymptotically sharp for $\alpha \to 0$ and $\alpha \to -\infty$.
\end{remark}

We finish with a more precise statement about the imaginary parts of the eigenvalues.

\begin{theorem}[The imaginary part of the eigenvalues]
\label{thm:imaginary-control}
Let $\alpha \in \C^k$.
\begin{enumerate}
\item If $\Re\alpha_j \geq 0$ for all $j=1,\dots,k$, then any eigenvalue $\lambda$ of $-\Delta_{\mathcal{V}_R}^\alpha$ satisfies
\begin{equation}
\label{eq:imaginary-control-positive}
	|\Im \lambda| \leq \max_{j=1,\dots,k} \frac{|\Im \alpha_j|}{\deg v_j} \left[2\sqrt{\Re\lambda} + \frac{1}{\MD\ell_\G}\right].
\end{equation}
\item If $\Re \alpha_j<0$ for at least one $j=1,\dots,k$, then for every $0 < \varepsilon < 1$ there exists a constant $C=C(\varepsilon)>0$ depending on $\G$ and each $\Re\alpha_j<0$ such that
\begin{equation}
\label{eq:imaginary-control-negative}
	|\Im \lambda| \leq \max_{j=1,\dots,k} \frac{|\Im \alpha_j|}{\deg v_j} \left[2(1-\varepsilon)\sqrt{\Re\lambda+C} + \frac{1}{\MD\ell_\G}\right].
\end{equation}
\end{enumerate}
\end{theorem}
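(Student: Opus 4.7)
The natural approach is to test the eigenvalue equation with the eigenfunction itself. Let $\psi \in H^1(\G)$ be an eigenfunction of $-\Delta_{\V_R}^\alpha$ corresponding to $\lambda$, normalised so that $\|\psi\|_{L^2(\G)}=1$. Plugging $\varphi=\psi$ into the weak formulation \eqref{eq:ev-weak-form} yields
\[
\lambda \;=\; \|\psi'\|_{L^2(\G)}^2 \;+\; \sum_{j=1}^{k} \alpha_j\,|\psi(v_j)|^2,
\]
and separating real and imaginary parts gives
\begin{align*}
\Re\lambda &= \|\psi'\|_{L^2(\G)}^2 + \sum_{j=1}^k \Re\alpha_j\,|\psi(v_j)|^2, \\
\Im\lambda &= \sum_{j=1}^k \Im\alpha_j\,|\psi(v_j)|^2.
\end{align*}

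The key step common to both parts is to control $|\Im\lambda|$ by factoring out the vertex degrees,
\[
|\Im\lambda| \;\le\; \max_{j=1,\dots,k} \frac{|\Im\alpha_j|}{\deg v_j} \cdot \sum_{j=1}^k \deg v_j\,|\psi(v_j)|^2,
\]
and then bounding the weighted trace sum on the right by the same sharp trace-type inequality that underpins Theorem~\ref{thm:numerical-range}. That inequality is built from the one-dimensional bound $|f(0)|^2 \le 2\|f'\|_{L^2(e)}\|f\|_{L^2(e)} + \tfrac{1}{\ell_e}\|f\|^2_{L^2(e)}$ applied to (halves of) edges incident to the Robin vertices and combined via Cauchy--Schwarz across edges; after careful bookkeeping of edge-endpoint assignments it delivers an estimate of the form $\sum_j \deg v_j |\psi(v_j)|^2 \le 2\|\psi'\|_{L^2(\G)} + \tfrac{1}{\MD\ell_\G}$. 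For part~(1), the hypothesis $\Re\alpha_j \ge 0$ fed into the real-part identity immediately yields $\|\psi'\|_{L^2(\G)}^2 \le \Re\lambda$, and substituting $\|\psi'\|_{L^2(\G)} \le \sqrt{\Re\lambda}$ produces \eqref{eq:imaginary-control-positive}.

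For part~(2), the real-part identity now only gives $\|\psi'\|_{L^2(\G)}^2 \le \Re\lambda + \sum_{j:\,\Re\alpha_j<0} |\Re\alpha_j|\,|\psi(v_j)|^2$, so $\|\psi'\|^2$ is no longer directly controlled by $\Re\lambda$. To recover control, the plan is to apply the one-dimensional trace inequality to each $|\psi(v_j)|^2$ with $\Re\alpha_j<0$ and then use Young's inequality $2ab \le \delta a^2 + b^2/\delta$ on the resulting cross term $2|\Re\alpha_j|\,\|\psi'\|_{L^2(e)}\,\|\psi\|_{L^2(e)}$: a small multiple $\delta\|\psi'\|_{L^2(\G)}^2$ can then be absorbed back into the left-hand side, leaving a remainder constant $C=C(\delta)$ depending only on $\G$ and on the individual $|\Re\alpha_j|$'s (arising from the Young-conjugate terms $|\Re\alpha_j|^2/\delta\cdot\|\psi\|^2_{L^2(e)}$ together with the $|\Re\alpha_j|/\ell_e\cdot\|\psi\|^2_{L^2(e)}$ contribution from the trace bound itself). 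After rearranging and re-parametrising $\delta$ in terms of $\varepsilon$, one arrives at an estimate of the form $\|\psi'\|_{L^2(\G)} \le c_\varepsilon\,\sqrt{\Re\lambda + C(\varepsilon)}$; feeding this back into the trace estimate of the previous paragraph produces \eqref{eq:imaginary-control-negative} with the matching prefactor.

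The main obstacle will be the combinatorial bookkeeping when several Robin vertices share incident edges: a naive pointwise application of the one-dimensional trace inequality at each $v_j$ would double-count the $\|\psi'\|^2$-contribution of shared edges and might prevent the absorbed term $\delta\|\psi'\|_{L^2(\G)}^2$ from being made arbitrarily small. This should be handled by splitting each such edge into two halves assigned to its two endpoints, or by averaging the inequality over the $\deg v_j$ edges incident with each Robin vertex; a further subtlety is verifying that the resulting constant $C(\varepsilon)$ depends only on $\G$ and on the negative $\Re\alpha_j$'s, and not on $\lambda$ itself.
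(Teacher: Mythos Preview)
Your proposal is correct and follows essentially the same route as the paper: test the weak form with the normalised eigenfunction, factor out $\deg v_j$ to reduce to the weighted trace sum $\sum_j \deg v_j\,|\psi(v_j)|^2$, bound this via the trace inequality of Lemma~\ref{lem:graph-trace-estimate} (whose proof handles precisely the edge-sharing bookkeeping you anticipate), and for part~(2) control the negative $\Re\alpha_j$ contributions by the standard $\varepsilon$--$C(\varepsilon)$ trace inequality, which is exactly your Young's-inequality absorption argument. The paper packages the trace estimate as a separate lemma and invokes the weighted version as ``standard'', but the mechanism is identical to what you describe.
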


\section{Proofs of the estimates}
\label{sec:numerical-range-proofs}

In this section we prove the statements collected in Section~\ref{sec:numerical-range}. Theorems~\ref{thm:numerical-range} and~\ref{thm:imaginary-control} will follow from a kind of ``trace-type'' inequality which allows us to control precisely the value that an arbitrary $H^1$-function takes at a given vertex in terms of certain subgraphs around it. This should be compared with \cite[Lemma~6.5]{BKL}. To this end, we first require some notation. Let $\xi:\E\to (0,1]$ be an edge-dependent length scaling factor. Given any vertex $v_j \in \mathcal{V}$, we denote by
\begin{displaymath}
	\Scal_j^\xi := \bigcup_{e \sim v_j} \xi(e)e
\end{displaymath}
the star subgraph of $\G$ whose central vertex is $v_j$ and whose pendant edges are the edges $e$ incident with $v_j$, scaled by the factor $\xi(e) \in (0,1]$. We will always make the identification that $\Scal_j^\xi$ is a subgraph of $\G$; in particular, we will treat the scaled edge $\xi(e)e \subset \Scal_j^\xi$ as a subset of the edge $e \subset \G$. In particular, for $\xi(e)\equiv 1$, the star $\Scal_j^1$ is the union of all edges in $\G$ incident with $v_j$; call this the \emph{spanning star} at $v_j$. For an arbitrary collection $\V_0 \subset \V$ of vertices, we denote by $\G_0$ the subgraph consisting of the union of all spanning stars of all vertices $v \in \V_0$. (For example, if $\G$ is a star, then the spanning star of the central vertex is the whole of $\G$, while the spanning star of any of the degree one vertices is just a single edge.) We also define, for any subgraph $\G'=(\V',\E')$ of $\G$,
\begin{displaymath}
	\ell_{\mathcal{G}'}:=\min\limits_{e\in\E'} \ell_e
\end{displaymath}
as the length of the shortest edge $e$ of $\G'$; and, as usual, we set $\|f\|_{\G'}:=\|f\|_{\G',2}$ to be the $L^2(\G')$-norm of $f$. 
\begin{lemma}
\label{lem:graph-trace-estimate}
Let $\xi:\E\to (0,1]$ and $v_j \in \mathcal{V}$ be arbitrary and denote by $\Scal_j^\xi$ the scaled star at $v_j$ as described above. Then
\begin{equation}
\label{eq:graph-trace-local}
	\deg v_j |f(v_j)|^2 \leq 2 \|f\|_{\Scal_j^\xi}\|f'\|_{\Scal_j^\xi} + \frac{1}{\ell_{\Scal_j^\xi}} \|f\|_{\Scal_j^\xi}^2.
\end{equation}
for all $f \in H^1(\G)$. Moreover, if $\mathcal{V}_0 \subset \mathcal{V}$ is an arbitrary set of vertices of $\G$ and $\G_0$ is the subgraph union of spanning stars for $\V_0$ as described above, then we have the estimate
\begin{equation}
\label{eq:graph-trace-global}
	\sum_{v_j \in \mathcal{V}_0} \deg v_j |f(v_j)|^2 \leq 2\|f\|_{\G_0}\|f'\|_{\G_0} + \frac{1}{\ell_{\G_0}} \|f\|_{\G_0}^2.
\end{equation}
\end{lemma}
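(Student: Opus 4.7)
The plan is to reduce everything to a one-dimensional pointwise identity on a single edge, then sum twice---first over incident edges to obtain the local inequality, and then over vertices in $\V_0$ to obtain the global one. Fixing $v_j\in\V$, identifying each $e\sim v_j$ with $[0,\ell_e]$ so that $v_j$ corresponds to $0$, and setting $\ell':=\xi(e)\ell_e$, the first step will be to average the elementary identity $|f(0)|^2=|f(x)|^2-\int_0^x(|f|^2)'(t)\,dt$ in $x$ over $[0,\ell']$ and swap the order of integration using Fubini, obtaining
\begin{equation*}
\ell'\,|f(v_j)|^2=\int_0^{\ell'}|f(x)|^2\,dx-\int_0^{\ell'}(\ell'-t)\,(|f|^2)'(t)\,dt.
\end{equation*}
Using $(|f|^2)'=2\,\Re(\overline{f}f')$ to dominate the last integrand in modulus by $2|f||f'|$, then crudely majorising $\ell'-t\leq \ell'$ and dividing by $\ell'$, gives the per-edge trace estimate
\begin{equation*}
|f(v_j)|^2\leq \frac{1}{\xi(e)\ell_e}\|f\|_{\xi(e)e}^2+2\int_{\xi(e)e}|f||f'|\,dx.
\end{equation*}

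The local inequality \eqref{eq:graph-trace-local} will then follow by summing this per-edge bound over all $e\sim v_j$, which produces $\deg v_j$ copies of $|f(v_j)|^2$ on the left. One application of Cauchy--Schwarz bounds the cross term by $\|f\|_{\Scal_j^\xi}\|f'\|_{\Scal_j^\xi}$, while the $L^2$ contribution is controlled by
\begin{equation*}
\sum_{e\sim v_j}\frac{1}{\xi(e)\ell_e}\|f\|_{\xi(e)e}^2\leq \Bigl(\max_{e\sim v_j}\frac{1}{\xi(e)\ell_e}\Bigr)\,\|f\|_{\Scal_j^\xi}^2=\ell_{\Scal_j^\xi}^{-1}\|f\|_{\Scal_j^\xi}^2.
\end{equation*}

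For the global estimate \eqref{eq:graph-trace-global} I would apply the local inequality with $\xi\equiv 1$ at each $v_j\in\V_0$, using $\ell_{\Scal_j^1}\geq\ell_{\G_0}$, and sum over $v_j\in\V_0$. A further Cauchy--Schwarz applied to the outer sum yields
\begin{equation*}
\sum_{v_j\in\V_0}\|f\|_{\Scal_j^1}\|f'\|_{\Scal_j^1}\leq \Bigl(\sum_{v_j\in\V_0}\|f\|_{\Scal_j^1}^2\Bigr)^{1/2}\Bigl(\sum_{v_j\in\V_0}\|f'\|_{\Scal_j^1}^2\Bigr)^{1/2},
\end{equation*}
and identifying $\sum_{v_j\in\V_0}\|g\|_{\Scal_j^1}^2$ with $\|g\|_{\G_0}^2$ (consistently with the subgraph structure of $\G_0$) then gives \eqref{eq:graph-trace-global}.

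The one genuine subtlety, and the step expected to require the most bookkeeping, is the passage from the individual $\Scal_j^1$'s to the single graph $\G_0$: an edge $e$ whose two endpoints both lie in $\V_0$ contributes to two spanning stars and must be handled consistently with the definition of $\|\cdot\|_{\G_0}$ and $\ell_{\G_0}$. The edge-weight $\xi$ is built into the local lemma precisely to provide this flexibility---for such a shared $e$ one may instead take $\xi(e)=1/2$ at both endpoints, so that the two scaled half-stars are disjoint and together partition $\G_0$, at the (at worst) cost of replacing $\ell_{\G_0}$ by $\ell_{\G_0}/2$ in the resulting local bounds. Once this edge-sharing accounting is set up, the proof reduces to the single-edge identity followed by two applications of Cauchy--Schwarz.
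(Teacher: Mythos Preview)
Your argument for the local inequality \eqref{eq:graph-trace-local} is correct and, up to an integration by parts, identical to the paper's: your weight $(\ell'-t)/\ell'$ is precisely the cut-off function $\varphi_j^\xi$ the paper introduces, and your ``average then Fubini'' identity is the same as applying the fundamental theorem of calculus to $|f|^2\varphi_j^\xi$.

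The global inequality \eqref{eq:graph-trace-global}, however, is not established by your sketch. You correctly identify the difficulty---edges with both endpoints in $\V_0$---but your proposed fix (take $\xi=1/2$ on shared edges so that the half-stars are disjoint) yields only $\frac{2}{\ell_{\G_0}}\|f\|_{\G_0}^2$, as you yourself note; and the naive route (take $\xi\equiv 1$ and ``identify'' $\sum_j\|g\|_{\Scal_j^1}^2$ with $\|g\|_{\G_0}^2$) double-counts shared edges and is simply false. Either way you do not obtain the constant $1/\ell_{\G_0}$ in the statement.

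The paper avoids this loss by summing the \emph{identities} (not the estimates) edge by edge. On an edge $e$ with both endpoints $v_i,v_j\in\V_0$, the two linear hats satisfy $\varphi_i+\varphi_j\equiv 1$ and $\varphi_i'+\varphi_j'\equiv 0$ on $e$, so adding the two exact formulae gives
\[
|f(v_i)|^2+|f(v_j)|^2=\int_e 2\,\Re(\bar f f')\,\dx \le 2\|f\|_e\|f'\|_e,
\]
with \emph{no} $\|f\|_e^2$ contribution at all from shared edges. Edges with a single endpoint in $\V_0$ contribute the usual $2\|f\|_e\|f'\|_e+\ell_e^{-1}\|f\|_e^2$. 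Summing over edges (each counted once) and a final Cauchy--Schwarz over the edge decomposition gives \eqref{eq:graph-trace-global} with the stated constant. The step you are missing is this cancellation; once you postpone taking absolute values until after combining the two endpoint identities on a shared edge, your argument goes through.
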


For the proof, we will use the following cut-off functions; for each $v_j \in \mathcal{V}$, we define $\varphi_j \in H^1 (\G)$ with support in $\Scal_j^\xi$ by setting
\begin{equation}
		\label{eq:cut-off-function}
	\varphi_j^\xi (x) = \begin{cases}  1 - \frac{\dist (x,v_j)}{\xi(e)\ell_e} \qquad &\text{if } x \in \xi(e)e \subset \Scal_j^\xi\\ 0 \qquad &\text{otherwise}.\end{cases}
\end{equation}
Then clearly $0 \leq \varphi_j^\xi \leq 1$; moreover, since we are assuming that $\G$ does not have any loops, if $\xi(e)=1$ for all $e$ then the collection $(\varphi_j^\xi)_{j=1}^n$ is a partition of unity.

\begin{proof}[Proof of Lemma~\ref{lem:graph-trace-estimate}]
Let $v_j \in \mathcal{V}$ be arbitrary, let $\Scal_j^\xi$ and $\varphi_j^\xi$ be as described above, and let $f \in H^1(\G)$ be arbitrary. Then for each edge $\xi(e)e$ of $\Scal_j^\xi$ the fundamental theorem of calculus on that interval applied to the function $|f|^2\varphi_j^\xi$ plus the fact that $\varphi_j^\xi(0)=0$ gives
\begin{displaymath}
	|f(v_j)|^2 = \int_0^{\xi(e)\ell_e} (|f|^2\varphi_j^\xi)'\,\dx = \int_0^{\xi(e)\ell_e} 2 \varphi_j^\xi \Re (\bar{f}\,f') + |f|^2(\varphi_j^\xi)'\,\dx,
\end{displaymath}
and summing over all edges $\xi(e)e\sim v_j$ yields
\begin{align*}
	\deg v_j |f(v_j)|^2 &= \int_{\Scal_j^\xi} 2 \varphi_j^\xi \Re (\bar{f}\,f') + |f|^2\varphi_j'\,\dx \\
						&\leq 2\|\varphi_j^\xi\|_{\Scal_j^\xi,\infty}\|f\|_{\Scal_j^\xi}\|f'\|_{\Scal_j^\xi} + \|(\varphi_j^\xi)'\|_{\Scal_j^\xi,\infty}\|f\|_{\Scal_j^\xi}^2.
\end{align*}
Using that $\|\varphi_j^{\xi}\|_\infty = 1$ and $\|(\varphi_j^{\xi}){'}\|_\infty = 1/\ell_{\Scal_j^\xi}$ yields \eqref{eq:graph-trace-local}. For \eqref{eq:graph-trace-global}, we argue similarly but distinguish edges which are incident with two vertices of $\mathcal{V}_0$. More precisely, if $v_i,v_j \in \mathcal{V}_0$ are two distinct vertices and $v_i\sim e\sim v_j$, then we write $\Scal_j=\Scal_j^1$ for the trivial scaling factor $\xi=1$ (as well as $\phi_j=\phi_j^1$) and obtain the estimate
\begin{displaymath}
	|f(v_i)|^2+|f(v_j)|^2 = \int_0^{\ell_e} 2 \varphi_i \Re (\bar{f}\,f') + |f|^2\varphi_i' + 2 \varphi_j \Re (\bar{f}\,f') + |f|^2\varphi_j'\,\dx.
\end{displaymath}
But since $\varphi_i=1-\varphi_j$ and $\varphi_i' = -\varphi_j'$ on $e$, this reduces to
\begin{displaymath}
	|f(v_i)|^2+|f(v_j)|^2\leq 2\|f\|_{e}\|f'\|_{e}.
\end{displaymath}
We now sum over all edges $e \subset \G_0 = \bigcup_{v_j \in \mathcal{V}_0} \Scal_j$ both of whose incident vertices are in $\mathcal{V}_0$. To these we also sum the estimates
\begin{displaymath}
	|f(v_i)|^2 \leq 2\|f\|_{e}\|f'\|_{e} + \frac{1}{\ell_e}\|f\|_e^2,
\end{displaymath}
as obtained above, over all edges $e$ in $\mathcal{E}_0$ which have only one incident vertex $v_i$ in $\mathcal{V}_0$. Since each edge in the union $\G_0$ of the spanning stars of $\mathcal{V}_0$ is counted only once, this yields
\begin{displaymath}
	\sum_{v_j \in \mathcal{V}_0} \deg v_j|f(v_j)|^2 \leq 2\|f\|_{\G_0}\|f'\|_{\G_0} + \frac{1}{\ell_{\G_0}} \|f\|_{\G_0}^2,
\end{displaymath}
that is, \eqref{eq:graph-trace-global}.
\end{proof}

We can now give the proofs of Theorems~\ref{thm:numerical-range} and~\ref{thm:imaginary-control}.

\begin{proof}[Proof of Theorem~\ref{thm:numerical-range}]
(1) Let
\begin{displaymath}
	\lambda = \|f'\|_{\G}^2 + \sum_{j=1}^k\alpha_j|f(v_j)|^2,
\end{displaymath}
$f \in H^1(\G)$, $\|f\|_{\G}=1$, be any point in $W(a_\alpha)$. If we set $t:=\|f'\|_{\G}^2$, $s_j:=|f(v_j)|^2$ for each $v_j\in\V_R$ and we consider $\Scal_j^\xi$ for $\xi(e)=1/2$ for each $e\sim v_j$, then the stars $\Scal_j^\xi$ are all pairwise disjoint, $j=1,\ldots,k$. Then $\lambda$ has the form $\lambda = t + \sum_{j=1}^k \alpha_j s_j$, and the first statement of Lemma~\ref{lem:graph-trace-estimate} (together with the estimate that every star $\Scal_j^{1/2}$ has length at least $\ell_\G/2$) yields $s_j\leq \frac{2}{\MD}\sqrt{\tau_j}+\frac{2}{\MD\ell_\G}$ for each $j=1,\dots,k$, where the $\tau_j = \|f\|_{\Scal_j^{1/2}}^2$ are as in the statement of the theorem.

(2) Here we set $t:=\|f'\|_{\G}^2$ as before, but now $s:=\sum_{j=1}^k|f(v_j)|^2$, $\lambda$ has the form $\lambda=t+\alpha s$, and the estimate \eqref{eq:graph-trace-global} from Lemma~\ref{lem:graph-trace-estimate} implies that $s \leq \frac{2}{\MD}\sqrt{t} + \frac{1}{\MD\ell_\G}$.
\end{proof}

\begin{proof}[Proof of Theorem~\ref{thm:imaginary-control}]
(1) We simply note that, if $f \in H^1(\G)$ is an eigenfunction corresponding to $\lambda$, normalised so that $\|f\|_{\G}=1$, then by the second statement of Lemma~\ref{lem:graph-trace-estimate} applied to the union $\G_0$ of the stars $\Scal_j^1$, $j=1,\ldots,k$, whose total length we estimate from below by $\ell_\G$,
\begin{displaymath}
\begin{split}
	|\Im \lambda| 
	    = \left|\sum_{j=1}^k\Im \alpha_j|f(v_j)|^2\right| 
	&\leq \sum_{j=1}^k \frac{|\Im\alpha_j|}{\deg v_j} \deg v_j|f(v_j)|^2 \\
	&\leq \max_{j=1,\dots,k} \frac{|\Im\alpha_j|}{\deg v_j}\left[2\|f'\|_{\G}+\frac{1}{\MD\ell_\G}\right]\\
	&\leq \max_{j=1,\dots,k} \frac{|\Im\alpha_j|}{\deg v_j}\left[2\sqrt{\Re\lambda}+\frac{1}{\MD\ell_\G}\right],
\end{split}
\end{displaymath}
where the last inequality follows from taking the real part of the quadratic form \eqref{eq:qg-form} for $\lambda\in\C$ since $\Re \alpha$ was assumed non-negative. 

(2) We use the following weighted trace inequality: fix $k'\in\{1,\dots,k\}$ such that (after relabelling the $v_1,\dots,v_k$ if necessary) $\Re \alpha_j< 0$ if and only if $j\leq k'$. Then for every $\delta>0$ there exists a constant $C=C(\G,\Re\alpha_1 , \ldots, \Re\alpha_{k'}, \delta)$ such that
\begin{displaymath}
	0 \leq \sum_{j=1}^{k'} (-\Re \alpha_j) |f(v_j)|^2 \leq \delta\|f'\|_{\G}^2 + C\|f\|_{\G}^2
\end{displaymath}
for all $f \in H^1(\G)$, which can be obtained from the usual trace inequality by a standard $\varepsilon$-$C(\varepsilon)$ argument. Since for the eigenfunction $f$, normalised so that $\|f\|_{\G}=1$,
\begin{displaymath}
	\Re \lambda = \|f'\|_{\G}^2 + \sum_{j=1}^k \Re \alpha_j |f(v_j)|^2 
		\geq \|f'\|_{\G}^2 -\delta\|f'\|_{\G} -C
\end{displaymath}
it follows that
\begin{displaymath}
	\|f'\|_{\G} \leq \frac{\sqrt{\Re \lambda + C}}{\sqrt{1-\delta}}.
\end{displaymath}
If we now write $1-\varepsilon$ for $1/\sqrt{1-\delta}$, then the same argument as in (1), viz.
\begin{displaymath}
	|\Im \lambda| = \left|\sum_{j=1}^k\Im \alpha_j|f(v_j)|^2\right| \leq
	\max_{j=1,\dots,k} \frac{|\Im\alpha_j|}{\deg v_j}\left[2\|f'\|_{\G}+\frac{1}{\MD\ell_\G}\right],
\end{displaymath}
leads to \eqref{eq:imaginary-control-negative}.
\end{proof}

\begin{proof}[Proof of Corollary~\ref{cor:est-real}]
This follows directly from Theorem~\ref{thm:numerical-range}(2); indeed, for any eigenvalue $\lambda$
\begin{displaymath}
	\Re \lambda = t + \Re \alpha \cdot s \geq t + \Re \alpha \left(\frac{2\sqrt{t}}{\MD} + \frac{1}{\MD\ell_\G}\right).
\end{displaymath}
A short calculation shows that the latter expression is minimised over all possible $t>0$ when $t=\alpha^2/\MD^2$; this then yields \eqref{eq:est-real}.
\end{proof}

We now prove the complementary upper estimates \eqref{eq:est-real-upper} in the case that $\alpha$ is real and negative and finish with the proof of Theorem~\ref{cor:qg-behaviour-neg-alpha}. Both will rely on the variational (min-max) characterisation of the eigenvalues, valid for all real $\alpha$ (see, e.g., \cite[Section~4.1]{BKKM}), as well as the following eigenvalue estimate for stars.

\begin{lemma}
\label{lem:upper-est-star}
Let $\Scal$ be a star with a Robin parameter of strength $\alpha$ at its central vertex of degree $\MD$ and Dirichlet conditions at all other vertices. Then its first eigenvalue $\lambda_1^D(\alpha,\Scal)$ satisfies
\begin{equation}
\label{eq:lem-dirichlet-star-bound}
	\lambda_1^D(\alpha,\mathcal{S}) \leq  -\left(\frac{\alpha}{\MD} + \frac{1}{\ell_\Scal}\right)^2 < 0
\end{equation}
if $\alpha<-\frac{\MD}{\ell_\Scal}$, where as before $\ell_{\Scal}$ denotes the length of the shortest edge of $\Scal$.
\end{lemma}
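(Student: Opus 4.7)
The plan is to reduce the eigenvalue problem on the star $\Scal$ to a scalar Robin--Dirichlet problem on an interval of length $\ell_\Scal$ via a radially symmetric test function in the min-max principle, and then to bound the resulting interval eigenvalue by an elementary inequality for $\coth$.

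First, I would parametrise each of the $\MD$ edges $e_j$ of $\Scal$ by $x \in [0,\ell_{e_j}]$ with $x=0$ at the central (Robin) vertex $v_0$ and $x=\ell_{e_j}$ at the pendant (Dirichlet) vertex. Given any $u\in H^1([0,\ell_\Scal])$ with $u(\ell_\Scal)=0$, extend it to $\Scal$ by setting $f|_{e_j}(x)=u(x)$ for $x\in[0,\ell_\Scal]$ and $f|_{e_j}(x)=0$ for $x\in[\ell_\Scal,\ell_{e_j}]$. Since $u(\ell_\Scal)=0$, this $f$ is continuous at every vertex (in particular, at $v_0$ it takes the common value $u(0)$), vanishes at every pendant and so lies in the form domain. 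A direct computation gives
\begin{equation*}
\frac{\|f'\|_{\Scal}^2 + \alpha|f(v_0)|^2}{\|f\|_{\Scal}^2} \;=\; \frac{\MD\|u'\|_{[0,\ell_\Scal]}^2 + \alpha|u(0)|^2}{\MD\|u\|_{[0,\ell_\Scal]}^2} \;=\; \frac{\|u'\|_{[0,\ell_\Scal]}^2 + (\alpha/\MD)|u(0)|^2}{\|u\|_{[0,\ell_\Scal]}^2},
\end{equation*}
so by the variational (min--max) characterisation, $\lambda_1^D(\alpha,\Scal)$ is bounded above by the first eigenvalue of $-u''=\lambda u$ on $[0,\ell_\Scal]$ subject to the Robin condition $u'(0)=(\alpha/\MD)u(0)$ at $x=0$ and the Dirichlet condition $u(\ell_\Scal)=0$.

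Next, I would bound this interval eigenvalue. Since $\alpha/\MD<-1/\ell_\Scal$, I expect its first eigenvalue to be negative, of the form $-\mu^2$ with $\mu>0$, realised by $u(x)=\sinh(\mu(\ell_\Scal-x))$, which automatically satisfies the Dirichlet condition at $\ell_\Scal$ and has no zero in $(0,\ell_\Scal)$ (so by Sturm--Liouville theory it is indeed the ground state). The Robin condition then reduces to the transcendental equation
\begin{equation*}
\mu\coth(\mu\ell_\Scal) = -\frac{\alpha}{\MD},
\end{equation*}
which has a unique positive solution because $\mu\mapsto\mu\coth(\mu\ell_\Scal)$ increases monotonically from $1/\ell_\Scal$ (as $\mu\to 0^+$) to $+\infty$. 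Writing $\coth y = 1 + 2/(e^{2y}-1)$ and using the elementary bound $e^{2\mu\ell_\Scal}-1\geq 2\mu\ell_\Scal$, I obtain $\mu\coth(\mu\ell_\Scal) \leq \mu + 1/\ell_\Scal$, which rearranges to $\mu\geq -\alpha/\MD - 1/\ell_\Scal>0$. Squaring yields $-\mu^2 \leq -(\alpha/\MD+1/\ell_\Scal)^2$, and combining with the reduction step gives the claimed bound.

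No step is particularly delicate: the symmetrisation via a radial test function is transparent, and the interval estimate rests only on elementary calculus. The only subtlety worth flagging is verifying that the candidate $-\mu^2$ obtained from the transcendental equation really is the \emph{smallest} interval eigenvalue, which is immediate from the absence of internal zeros of the sinh-eigenfunction on $(0,\ell_\Scal)$.
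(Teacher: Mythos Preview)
Your proof is correct and follows essentially the same route as the paper's: reduce to a Robin--Dirichlet problem on an interval of length $\ell_\Scal$, obtain the secular equation $\mu\coth(\mu\ell_\Scal)=-\alpha/\MD$, and apply the elementary bound $\coth y\leq 1+1/y$. The only cosmetic difference is that the paper first invokes domain monotonicity to pass to the equilateral star and then symmetry of the ground state to reach the interval problem, whereas you combine both steps into a single radial test-function argument extended by zero; the two are equivalent, and your packaging is arguably more direct.
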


\begin{proof}[Proof of Lemma~\ref{lem:upper-est-star}]
Since Dirichlet conditions are imposed on all degree one vertices, given an arbitrary compact star $\Scal$ we can use domain monotonicity with respect to inclusion to reduce to the special case of an equilateral star: indeed, if $\tilde{\Scal}$ denotes the truncated star having all edges of length $\ell_\Scal$, then $\lambda_1^D(\alpha,\Scal)\leq\lambda_1^D(\alpha,\tilde{\Scal})$, and we may thus assume that $\Scal=\tilde{\Scal}$ is in fact an equilateral star where each edge has length $\ell_\Scal$. 
We observe that the secular equation for $-\lambda_1^D(\alpha,\mathcal{S}) > 0$ reads
\begin{equation}
\label{eq:secular-star}
	\sqrt{\lambda} \coth (\sqrt{\lambda}\ell_\Scal) = -\frac{\alpha}{\MD},
\end{equation}
that is, $-\lambda_1^D(\alpha,\mathcal{S})$ is the smallest solution $\lambda>0$ of this equation. This follows from a short calculation using the vertex conditions and the symmetry property that the eigenfunction must be invariant under permutations of the $\MD$ equal edges of $\mathcal{S}$ (cf., e.g., \cite[Section~5]{BKKM}). Now the elementary inequality
\begin{displaymath}
	\coth (x) \leq \frac{1}{x} + 1, \qquad x>0,
\end{displaymath}
applied to the left-hand side of \eqref{eq:secular-star} gives
\begin{displaymath}
	\frac{1}{\ell_\Scal} + \sqrt{\lambda} \geq - \frac{\alpha}{\MD}.
\end{displaymath}
This is nontrivial if and only if $\alpha < -\MD/\ell_\Scal$. In this case, rearranging gives \eqref{eq:lem-dirichlet-star-bound}.
\end{proof}

\begin{proof}[Proof of the upper bound in Remark~\ref{rem:est-real}]
The bound $\lambda_1 (\alpha,\G) \leq \alpha k/|\G|$ follows immediately from taking $f \equiv 1$, that is, the eigenfunction corresponding to $\alpha=0$, as a test function in the variational characterisation. The other estimate will follow immediately from Lemma~\ref{lem:upper-est-star} and the inequality
\begin{displaymath}
	\lambda_1 (\alpha,\G) \leq \lambda_1^D (\alpha,\Scal),
\end{displaymath}
where $\Scal = \Scal_1^1$ is the star subgraph of $\G$ with central vertex $v_1$ (which we recall has degree $\deg v_1 = \MD = \min_{j=1,\ldots,k} \deg v_j$), as introduced above. This inequality, in turn, follows since the eigenfunction associated with $\lambda_1^D(\alpha,\mathcal{S})$, extended by zero to the rest of $\G$, may thus be canonically identified with a function in $H^1(\G)$ whose Rayleigh quotient is exactly equal to $\lambda_1^D(\alpha,\mathcal{S})$; equivalently, we may appeal directly to \cite[Theorem~3.10(1)]{BKKM}.
\end{proof}

We finish with the proof of Theorem~\ref{cor:qg-behaviour-neg-alpha}.

\begin{proof}[Proof of Theorem~\ref{cor:qg-behaviour-neg-alpha}]
The existence of $k$ eigenvalues with the claimed asymptotics, and the fact that non-divergent eigenvalues converge to points in $\sigma(-\Delta_{\V_R}^D)$ follow immediately from Theorem~\ref{thm:qg-behaviour}. We next show that there are no more than $k$ divergent eigenvalues. This follows from a standard interlacing statement: denoting the $k$th eigenvalue of $-\Delta_{\V_R}^D$ (counted with multiplicities) by $\lambda_k^D$, since the forms associated with $-\Delta_{\V_R}^\alpha$ and $-\Delta_{\V_R}^D$ agree on the form domain $H^1_0 (\G,\V_R)$ of the latter, and the quotient space $H^1(\G) / H^1_0 (\G,\V_R)$ has dimension $k$, it follows from the min-max characterisation of the eigenvalues that
\begin{displaymath}
	\lambda_{j-k}^D \leq \lambda_j(\alpha) \leq \lambda_j^D
\end{displaymath}
for all $\alpha \in \R$ and all $j \geq k + 1$ (see also, e.g., \cite[Section~3.1.6]{BerkolaikoKuchment} or \cite[Sections~3.1 and~4.1]{BKKM}). Hence $\lambda_j (\alpha)$ remains bounded whenever $j\geq k+1$, and so by Corollary~\ref{cor:dno-qg-asymptotics-duality} converges to an eigenvalue of the Dirichlet Laplacian.

It remains to prove that for $\alpha < -2\max\limits_{j=1,\dots,k}\left\{\frac{\deg v_j}{\ell_j}\right\}$ the Robin Laplacian has exactly $k$ negative eigenvalues: by the above reasoning, it suffices to find one fixed $\alpha$ for which it has at least $k$ such negative eigenvalues. To this end, for each $j=1,\ldots,k$, we consider each star $\Scal_j^{1/2}$ subgraph of $\G$ with Robin condition at its central vertex $v_j$; denote by $\psi_j$ the test function equal to the eigenfunction for $\lambda_1^D(\alpha,\Scal_j^{1/2})$ on $\Scal_j^{1/2}$, extended by zero to a function in $H^1(\G)$, and whose Rayleigh quotient equals $\lambda_1^D (\alpha,\Scal_j^{1/2})$. Then, since the supports of $\psi_j$ are pairwise disjoint, we can define the $k$-dimensional space $\mathcal{H}_k:=\bigoplus_{j=1}^k \psi_j \subset H^1(\G)$ as a space of test functions for $\lambda_k(\alpha,\G)$. If we choose any
\begin{displaymath}
\alpha < -2\max_{j=1,\dots,k}\left\{\frac{\deg v_j}{\ell_{\Scal_j}}\right\},
\end{displaymath}
then by Lemma~\ref{lem:upper-est-star} each function $\psi_j$, and thus every function in $\mathcal{H}_k$ has a negative Rayleigh quotient (where one should not forget the scaling factor $2\ell_{\Scal_j}^{1/2}=\ell_{\Scal_j}$). It follows from the min-max characterisation that $\lambda_k (\alpha,\G) < 0$ for such $\alpha$.
\end{proof}

\end{document}